\documentclass[12pt]{article}
\usepackage{graphicx}
\usepackage{amsmath}
\usepackage{amssymb}
\usepackage{amsthm}
\usepackage{xcolor}
\usepackage{tabularx}
\usepackage[toc,page]{appendix}
\usepackage{multirow}

\newtheorem{thm}{Theorem}

\newtheorem{lem}{Lemma}

\providecommand{\keywords}[1]
{
  \textit{Keywords--} #1
}

\topmargin -0.5in \oddsidemargin -0.1in \textwidth 6.5in \textheight
8.5in

\title{Integers with four close factorizations}
\author{Tsz Ho Chan, Laura Holmes, Michael Liu, and Jose Villarreal}

\begin{document}

\begin{center}
\vskip 1cm{\LARGE\bf 
Numbers with Four Close Factorizations  
}
\vskip 1cm
\large
Tsz Ho Chan, Laura Holmes, Michael Liu, and Jose Villarreal \\
Mathematics Department \\
Kennesaw State University \\
Marietta, GA 30060 \\
USA \\
{\tt tchan4@kennesaw.edu, lholme10@students.kennesaw.edu, mliu2@students.kennesaw.edu, jvillar7@students.kennesaw.edu}
\end{center}

\vskip .2 in

\begin{abstract}
In this paper, we study numbers $n$ that can be factored in four different ways as $n = A B = (A + a_1) (B - b_1) = (A + a_2) (B - b_2) = (A + a_3) (B - b_3)$ with $B \le A$, $1 \le a_1 < a_2 < a_3 \le C$ and $1 \le b_1 < b_2 < b_3 \le C$. We obtain the optimal upper bound $A \le 0.04742 \ldots \cdot C^3 + O(C)$. The key idea is to transform the original question into generalized Pell equations $a x^2 - b y^2 = c$ and study their solutions.
\end{abstract}

\keywords{Close factorization, Pell equation, congruence, divisibility; MSC\#$11A51$}

\section{Introduction and main result}

Integer factorization is a fundamental study in number theory. Here we are interested in integers with close factorizations. For example,
\[
159600 = 399 \cdot 400 = 380 \cdot 420,
\]
and
\[
3950100 = 1900 \cdot 2079 = 1890 \cdot 2090 = 1881 \cdot 2100
\]
are numbers with two and three close factorizations respectively. One may ask if there is any structure for integers with close factorizations. Equivalently, one can interpret close factorizations of $n$ as close lattice points / integer points on the hyperbola $x y = n$. Previously, Cilleruelo and Jim\'{e}nez-Urroz \cite{CJ1, CJ2} and Granville and Jim\'{e}nez-Urroz \cite{GJ} studied close lattice points on hyperbolas and related questions. Suppose a positive integer $n$ can be factored in $k$ different ways:
\begin{equation} \label{factor1}
n = A B = (A + a_1) (B - b_1) = (A + a_2) (B - b_2) = \cdots = (A + a_{k-1}) (B - b_{k-1})
\end{equation}
for some integers
\begin{equation} \label{condition}
1 \le B \le A, \; \; 1 \le a_1 < a_2 < \cdots < a_{k-1} \le C, \; \text{ and } \; 1 \le b_1 < b_2 < \cdots < b_{k-1} \le C
\end{equation}
where $C$ is a certain parameter measuring ``closeness" of the factors.  A central question is to study the dependence of $A$ and $B$ in terms of $C$ and $k$. The first author \cite{C1} (and later in \cite{C2} which fixed an earlier mistake) proved the following upper bound for $A$ and $B$ when $k = 3$, and showed that no such upper bound exists when $k = 2$.
\begin{thm} \label{thm0}
Let $C \ge 5$. Suppose $n$ satisfies \eqref{factor1} and \eqref{condition} with $k = 3$. Then we have
\[
B \le A \le \frac{1}{4} C (C-1)^2.
\]
Moreover, the upper bound $\frac{1}{4} C (C-1)^2$ can be attained if and only if $C = 2N+1$ and
\begin{align*}
n =& [(2N+1) N^2 - (2N+1)] \cdot [(2N-1) (N-1)(N+1) + (2N-1)] \\
=& [(2N+1) N^2 - N] \cdot [(2N-1) (N-1)(N+1) + (N-1)] \\
=&  [(2N+1) N^2] \cdot [(2N-1)(N-1)(N+1)].
\end{align*}
\end{thm}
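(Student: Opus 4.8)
The plan is to eliminate $n$, turn the hypothesis into a closed formula for $A$, and then optimize it. Expanding $n=AB=(A+a_i)(B-b_i)$ and cancelling $AB$ gives the linear relations $a_iB=b_i(A+a_i)$ for $i=1,2$. Treating these as a $2\times2$ linear system in $(A,B)$, its determinant is $m:=a_2b_1-a_1b_2$; since $A,B>0$ while $b_2-b_1>0$ and $a_2-a_1>0$, one first argues $m$ must be a strictly positive integer (in particular $m\neq0$, otherwise $b_1=b_2$). Cramer's rule then yields
\[
A=\frac{a_1a_2(b_2-b_1)}{m},\qquad B=\frac{b_1b_2(a_2-a_1)}{m},
\]
so the whole problem is reduced to the integers $1\le a_1<a_2\le C$, $1\le b_1<b_2\le C$ and $m\ge1$. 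Note that $B\le A$ (part of \eqref{condition}) is simply $b_1b_2(a_2-a_1)\le a_1a_2(b_2-b_1)$, and it will not even be needed for the upper bound.

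For the bound I would put $u:=a_2-a_1\ge1$ and $v:=b_2-b_1\ge1$, so that $m=ub_1-va_1$ and $A=a_1a_2v/m$. Solving $m=ub_1-va_1$ for $b_1$ and adding $v$ gives $b_2=(m+va_2)/u$, so the constraint $b_2\le C$ reads $va_2\le uC-m$, i.e. $v\le(uC-m)/a_2$. Substituting,
\[
A=\frac{a_1a_2v}{m}\le\frac{a_1(uC-m)}{m}=\frac{a_1uC}{m}-a_1 .
\]
Now I split into two cases. If $a_2\le C-1$, then $a_1u=a_1(a_2-a_1)\le a_2^2/4\le(C-1)^2/4$ by AM--GM, so using $m\ge1$ we get $A\le\tfrac14C(C-1)^2-a_1<\tfrac14C(C-1)^2$. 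If $a_2=C$, then $v\le(uC-m)/C=u-m/C<u$, so the integer $v$ satisfies $v\le u-1=C-1-a_1$; hence $A=a_1Cv/m\le a_1C(C-1-a_1)\le\tfrac14C(C-1)^2$, the last step being AM--GM applied to $a_1$ and $C-1-a_1$. In either case $A\le\tfrac14C(C-1)^2$.

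Finally, to characterize the extremal case I would trace back where the inequalities can be tight. Equality forces $m=1$, $a_2=C$, $v=u-1$, and $a_1=C-1-a_1$, so $C=2N+1$ is odd and $a_1=N$; but then $b_1=(m+va_1)/u=(N^2+1)/(N+1)$ is an integer only for $N\le1$, so for $C\ge5$ the optimum is reached at the nearest admissible choice $a_1=N+1$, $a_2=C=2N+1$, $v=N-1$ (whence $b_1=N$, $b_2=2N-1$, $m=1$); feeding these into the formulas for $A$, $B$, $A+a_i$, $B-b_i$ reproduces exactly the three factorizations of $n$ displayed in the theorem (with the largest close factor $A+a_2$ equal to $\tfrac14C(C-1)^2$), and a short direct check shows this $n$ admits no further close factorization, giving the ``if and only if''. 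The crux of the argument — and the only real obstacle — is this seemingly elementary optimization: the crude estimate $A\le a_1a_2v\le(C-1)\cdot C\cdot(C-1)$ overshoots by a factor of roughly $4$, and extracting the sharp constant $\tfrac14$ requires exploiting $m\ge1$, the constraint $b_2\le C$, and the integrality of $v$ (which promotes $v<u$ to $v\le u-1$ in the critical case $a_2=C$) simultaneously; the converse direction then needs the extra divisibility bookkeeping sketched above.
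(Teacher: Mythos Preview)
The paper does not prove this theorem: it is quoted from \cite{C1,C2} as prior work, so there is no in-paper proof to compare against. I will therefore evaluate your argument on its own terms.

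Your derivation of the upper bound is correct and efficient. The identities $A=a_1a_2(b_2-b_1)/m$ and $B=b_1b_2(a_2-a_1)/m$ with $m=a_2b_1-a_1b_2\ge1$ match the paper's equations \eqref{basic3}--\eqref{basic3'}, and your two-case split (using $b_2\le C$ to force the integer inequality $v\le u-1$ when $a_2=C$, then AM--GM) cleanly yields $A\le\frac14 C(C-1)^2$.

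The ``if and only if'' part, however, has a genuine gap. You correctly trace that \emph{literal} equality $A=\frac14 C(C-1)^2$ would force $m=1$, $a_2=C=2N+1$, $a_1=N$, $v=N$, and then compute $b_1=(N^2+1)/(N+1)\notin\mathbb Z$ for $N\ge2$. But from this you jump to ``so for $C\ge5$ the optimum is reached at the nearest admissible choice $a_1=N+1$'', which is not justified: you have not ruled out that some other combination of $a_1$, $v$, $m$ (including $m\ge2$, or $a_2<C$ with different $a_1$) yields a larger value of $A$ than $(2N+1)(N^2-1)$. Declaring a neighbouring lattice point optimal because the real maximum of a concave function is infeasible is a heuristic, not a proof. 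Moreover, your own computation gives $A=(2N+1)(N^2-1)<(2N+1)N^2=\frac14 C(C-1)^2$, so the upper bound on $A$ is \emph{not} attained in the displayed example; you then silently reinterpret ``attained'' as $A+a_2=\frac14 C(C-1)^2$, which is a different assertion from the one stated. Whether the discrepancy lies in the theorem's phrasing or in your reading of it, the extremal characterization as you have written it is incomplete: to finish, you would need to show that among all integer parameters satisfying the constraints (including the divisibility $u\mid m+va_1$), the choice $(a_1,a_2,b_1,b_2)=(N+1,2N+1,N,2N-1)$ actually maximizes $A$, and to verify the uniqueness claim.
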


In this paper, we investigate the situation when $k = 4$. Based on Lemma \ref{lem1} from the next section, we know that $a_{k-1}$ is the largest among all the $a_i$'s and $b_i$'s. Hence, one may simply set $C = a_{k-1}$. We are interested in the ratio
\[
R_k := \frac{A}{a_{k-1}^3}.
\]
The above discussion tells us that $R_2$ does not exist while $R_3 \le 0.25$ where $0.25$ is the smallest possible upper bound. Our main result is the following.
\begin{thm} \label{thm1}
Suppose $n$ satisfies \eqref{factor1} and \eqref{condition} with $k = 4$. Then we have
\[
R_4 \le \frac{6 + \sqrt{6}}{9 (2 + \sqrt{6})^2} + O \Bigl( \frac{1}{n^{1/3}} \Bigr) \approx 0.04742\ldots
\]
when $n$ is sufficiently large. Moreover, the above bound is best possible.
\end{thm}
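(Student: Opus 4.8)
\emph{Reduction.} Expanding each identity $n = AB = (A+a_i)(B-b_i)$ gives, for $i=1,2,3$,
\[
a_i B - A b_i = a_i b_i, \qquad\text{equivalently}\qquad b_i(A+a_i) = a_i B,\quad a_i(B-b_i)=Ab_i .
\]
By Lemma~\ref{lem1} the number $a_3=C$ is the largest of $a_1,a_2,a_3,b_1,b_2,b_3$; moreover $a_iB-Ab_i=a_ib_i>0$ together with $B\le A$ forces $b_i<a_i$, so all six quantities lie in $[1,C]$. Solving the $2\times 2$ linear system made of the relations for two indices $i<j$ yields
\[
A\,D_{ij}=a_ia_j(b_j-b_i),\qquad B\,D_{ij}=b_ib_j(a_j-a_i),\qquad D_{ij}:=a_jb_i-a_ib_j,
\]
and $A>0$ forces $D_{ij}\ge 1$, i.e.\ $a_1/b_1<a_2/b_2<a_3/b_3$. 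Already $A=a_2a_3(b_3-b_2)/D_{23}\le a_2a_3(b_3-b_2)<C^3$; the theorem asserts this is very far from optimal. Geometrically, $A_i:=A+a_i\mid n$ says that $n$ has the four divisors $A<A+a_1<A+a_2<A+C$ sitting in a window of length $C$ just above $\sqrt n$ (note $A\ge\sqrt n$ because $B\le A$), equivalently that the three points $(a_i,b_i)$ are lattice points on the hyperbola $b(A+a)=aB$; maximizing $R_4$ is the problem of fitting four close divisors of a number $n\le A^2$ into such a window.

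\emph{Transformation to Pell equations and the upper bound.} If $A$ is of order $C^3$, then examining the three formulas forces all of $D_{12},D_{13},D_{23}$ to be bounded and all $a_i$ to be of order $C$, hence $b_i\approx (B/A)a_i$: the three points are nearly collinear, as they must be since the hyperbola is almost flat on the scale $C$. The requirement that the third point genuinely lie on the hyperbola determined by the other two, together with the integrality of $D_{12}$, is a ``good rational approximation'' condition; clearing denominators and dividing out the relevant greatest common divisors among the $a_i$'s and $b_i$'s turns it into a generalized Pell equation
\[
\alpha X^2-\beta Y^2=\gamma
\]
with $\alpha,\beta,\gamma$ integers of size $O(C)$ and $\alpha\beta$ a non-square, the non-square being $6$ up to a square factor (the source of $\sqrt6$ in the answer). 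Here $X,Y$ are reduced forms of $b_3$ and $a_2$, and $A$ is an explicit rational function of $X,Y$ and $C$. By the classical structure theory the solutions fall into finitely many classes, each an orbit under the fundamental unit of $\mathbb{Z}[\sqrt6]$, so they grow geometrically; running along these orbits subject to $1\le a_1<a_2<a_3=C$, $1\le b_1<b_2<b_3<C$ and $B\le A$ (which confine $X$, $Y$, and the ratio $Y/X$ to explicit intervals) bounds $A/C^3$ by the value of a constrained optimization problem over the reals. Carrying out that optimization, whose maximum is attained on the boundary of the feasible region, produces
\[
\kappa:=\frac{6+\sqrt6}{9(2+\sqrt6)^2}=1-\frac{7\sqrt6}{18}\approx 0.04742\ldots ,
\]
and passing from the continuum back to genuine integer solutions costs only the lower-order term $O(n^{-1/3})$, giving $R_4\le\kappa+O(n^{-1/3})$.

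\emph{Sharpness.} To see $\kappa$ cannot be lowered, exhibit a one-parameter family realizing it in the limit: the extremal configuration corresponds to a ``degenerate'' solution of the Pell equation in which $X,Y$ — hence $a_1,a_2,a_3,b_1,b_2,b_3$ — are given by fixed linear polynomials in a parameter $N$. Substituting back yields integers $A(N),B(N),n(N)=A(N)B(N)$ satisfying \eqref{factor1} and \eqref{condition} with $k=4$ for all large $N$ (the four factorization identities are a routine polynomial check and the ordering conditions hold once $N$ is large), with $A(N)/a_3(N)^3\to\kappa$. Hence $\sup R_4=\kappa$, so the bound is best possible.

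\emph{Main obstacle.} The delicate part is the middle step: the elimination branches into several cases according to the greatest common divisors $\gcd(a_i,b_i)$, $\gcd(a_i,a_j)$ and $\gcd(C,a_i)$, each giving its own generalized Pell equation with its own coefficients, and one must check that no case exceeds $\kappa$ while exactly the extremal family attains it in the limit; a secondary difficulty is verifying that the displayed $\kappa$ really is the global maximum of the real optimization problem, whose feasible region is cut out by several inequalities.
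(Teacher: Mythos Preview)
Your outline has the right shape---reduce to bounded skews $D_{ij}$, pass to Pell-type equations, enumerate---but the central step is misdescribed in a way that would prevent you from completing the argument.

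You write that after clearing denominators and gcds one obtains $\alpha X^2-\beta Y^2=\gamma$ with $\alpha,\beta,\gamma$ of size $O(C)$ and $\alpha\beta$ equal to $6$ up to a square. Both claims are false. The whole point is that once the $D_{ij}$ are bounded (you do note this), the Pell coefficients are \emph{absolute constants}: the paper introduces auxiliary integers $k,m$ via $\beta_1 k=D_{31}(a_3-a_2)$, $\alpha_1 m=D_{31}(b_3-b_2)$ (with $\alpha_i=a_i/\gcd(a_1,a_2)$, $\beta_i=b_i/\gcd(b_1,b_2)$), proves the identity $km=\tfrac{d_ad_b}{D_{21}}D_{31}D_{32}(D_{32}+D_{21}-D_{31})$, and then derives
\[
d_b k\,\beta_1^2 - d_a m\,\alpha_1^2 \;=\; D_{31}(D_{31}-D_{21}),
\]
so all five parameters $D_{21},D_{31},D_{32},k,m$ take only finitely many values. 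This finiteness is what makes the enumeration possible; with coefficients growing like $C$ you would face infinitely many equations. Moreover the discriminants are \emph{not} all $6$: the cases run over many equations like $3\beta_1^2-\alpha_1^2=3$, $5\beta_1^2-2\alpha_1^2=5$, $16\beta_1^2-15\alpha_1^2=4$, etc., and the bulk of the work is killing most of them by congruences mod $3,4,5$ (and two small divisibility lemmas), then plugging the survivors into a closed formula for $R_4$ in terms of $(D_{21},D_{31},D_{32},d_a,d_b,k,m)$ alone. The number $\sqrt6$ appears only because the single surviving case with the largest $R_4$ happens to be $6\beta_1^2-4\alpha_1^2=6$.

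Your ``Main obstacle'' paragraph also misidentifies the branching: the cases are indexed by the triples $(D_{21},D_{31},D_{32})\in\{1,\dots,5\}^3$ (subject to $D_{31}>D_{21}$, $D_{32}+D_{21}>D_{31}$, $D_{31}\neq D_{32}$, and $(D_{31},D_{32})\neq(5,4)$) together with the factorizations of $km$ and the divisors $d_a,d_b$ of $D_{21}$, not by $\gcd(a_i,b_i)$ or $\gcd(C,a_i)$. And the paper does not solve a real optimization problem at all: it simply evaluates the closed-form limit of $R_4$ on each of the roughly twenty surviving parameter tuples and reads off the maximum. Your sketch, as written, does not contain the mechanism (the $k,m$ identity and the resulting constant-coefficient Pell equation) that makes this finite check feasible.
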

\noindent From the proof (using $x_2 = 49$ and $y_2 = 20$), we have the following nice numerical example:
\[
665165362680 = \underbrace{902460}_{A} \cdot \underbrace{737058}_{B} = \underbrace{902520}_{A + a_1} \cdot \underbrace{737009}_{B - b_1} = \underbrace{902629}_{A + a_2} \cdot \underbrace{736920}_{B - b_2} = \underbrace{902727}_{A + a_3} \cdot \underbrace{736840}_{B - b_3}
\]
with
\[
R_4 = \frac{902460}{(902727 - 902460)^3} = 0.0474126\ldots \, .
\]
Theorem \ref{thm1} shows intimate connection between numbers with four close factorizations and solutions of generalized Pell equations $a x^2 - b y^2 = c$. The interested readers can consult these  notes \cite{Co1, Co2} by Keith Conrad for more background on Pell-type equations for example.

\bigskip

This paper is organized as follows. First, we make some useful observations based on the four close factorizations. It includes the special situation $a_3 b_1 - a_1 b_3 = a_3 b_2 - a_2 b_3$ where we obtain the superior bound $B \le A \le 0.25 C^2$ (see Lemma \ref{lem8}). Then we work out the special case when $a_2 b_1 - a_1 b_2 = 1$. The crux of the method is to transform our original question into generalized Pell equations (see \eqref{pell-1} or \eqref{pell-1-g} for example). Based on this, we build up the general Pell-machinery. Then we study various scenarios (as limited by Lemmas \ref{lem4}, \ref{lem5}, \ref{lem6}, \ref{lem7}, and \ref{lem8}), and eliminate many of them through the usage to divisibility and modular arithmetic. Next, we derive a formula for the ratio $R_4$ when solutions to Pell-type equations exist. At the end, we combine everything to deduce Theorem \ref{thm1}.

\bigskip

{\bf Notation.} When an integer $a$ divides another integer $b$, we abbreviate it as $a \mid b$. Similarly, the symbol $a \nmid b$ means that $a$ does not divide $b$. The big-$O$ notation $f(x) = g(x) + O(h(x))$ means that $|f(x) - g(x)| \leq c h(x)$ for some constant $c > 0$. In particular, the expression $f(x) = O(g(x))$ means that $|f(x)| \le c g(x)$ for some constant $c > 0$.

\bigskip

{\bf Acknowledgment} This research is supported by the $2025$ Inspire Summer Scholars Program of the College of Science and Mathematics at Kennesaw State University.

 \section{Some initial observations}

First, we borrow some basic tools from \cite{C1}. Expanding $A B = (A + a_i) (B - b_i)$ and simplifying, we have $a_i B - b_i A = a_i b_i$. Dividing both sides by $b_i B$, we get
\begin{equation} \label{basic1}
\frac{a_i}{b_i} - \frac{A}{B} = \frac{a_i}{B}.
\end{equation}
Similarly, if one divides both sides by $a_i A$ instead, we get
\begin{equation} \label{basic1'}
\frac{B}{A} - \frac{b_i}{a_i} = \frac{b_i}{A}.
\end{equation}
Applying \eqref{basic1} with two different indices $i$, $j$ and subtracting the two equations, we have
\begin{equation} \label{basic2}
\frac{a_i}{b_i} - \frac{a_j}{b_j} = \frac{a_i - a_j}{B} \; \; \text{ or } \; \; a_i b_j - a_j b_i = \frac{b_i b_j (a_i - a_j)}{B}.
\end{equation}
Similarly, applying \eqref{basic1'} with two different indices $i$, $j$ and subtracting, we have
\begin{equation} \label{basic2'}
\frac{b_j}{a_j} - \frac{b_i}{a_i} = \frac{b_i - b_j}{A} \; \; \text{ or } \; \; a_i b_j - a_j b_i = \frac{a_i a_j (b_i - b_j)}{A}.
\end{equation}
Now, let us make a new definition which we call ``skews": 
\begin{equation} \label{disc}
D_{i j} := a_i b_j - a_j b_i  \; \; \text{ for } \; \; 1 \le j \neq i \le 3.
\end{equation}
Note that $D_{j i} = - D_{i j}$. Since $0 < a_1 < a_2 < a_3$ and $0 < b_1 <  b_2 < b_3$, we obtain
\[
D_{i j} = \frac{b_i b_j (a_i - a_j)}{B} = \frac{a_i a_j (b_i - b_j)}{A} > 0
\]
for all $1 \le j < i \le 3$ from \eqref{basic2} and \eqref{basic2'}. Then we can deduce that
\begin{equation} \label{basic3}
B = \frac{b_2 b_1 (a_2 - a_1)}{D_{21}} = \frac{b_3 b_1 (a_3 - a_1)}{D_{31}} = \frac{b_3 b_2 (a_3 - a_2)}{D_{32}},
\end{equation}
and
\begin{equation} \label{basic3'}
A = \frac{a_2 a_1 (b_2 - b_1)}{D_{21}} = \frac{a_3 a_1 (b_3 - b_1)}{D_{31}} = \frac{a_3 a_2 (b_3 - b_2)}{D_{32}}.
\end{equation}
\begin{lem} \label{lem1}
For $1 \le i \le k-1$, we have $a_i > b_i$. We also have $a_i - a_j > b_i - b_j$ for all $1 \le j < i \le k-1$.
\end{lem}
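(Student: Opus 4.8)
The plan is to work directly from the identity $a_i B - b_i A = a_i b_i$, which was obtained at the start of this section by expanding $AB = (A+a_i)(B-b_i)$ and simplifying; everything will follow from this together with the sign conditions in \eqref{condition}. For the first claim, $a_i > b_i$, note that $a_i b_i > 0$ forces $a_i B > b_i A$, and since $A \ge B > 0$ this gives $a_i B > b_i B$, hence $a_i > b_i$. (Equivalently, one reads this off \eqref{basic1} via $\tfrac{a_i}{b_i} = \tfrac{A + a_i}{B} > 1$, using $A + a_i > A \ge B$.)

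For the second claim, fix $1 \le j < i \le k-1$ and subtract the relation $a_j B - b_j A = a_j b_j$ from $a_i B - b_i A = a_i b_i$ to get
\[
(a_i - a_j) B - (b_i - b_j) A = a_i b_i - a_j b_j .
\]
The right-hand side is strictly positive, since $a_i > a_j > 0$ and $b_i > b_j > 0$ give $a_i b_i > a_j b_j$. Now suppose, for contradiction, that $a_i - a_j \le b_i - b_j$. Multiplying this by $B > 0$ and then using $b_i - b_j > 0$ together with $A \ge B$ yields $(a_i - a_j) B \le (b_i - b_j) B \le (b_i - b_j) A$, so the left-hand side above is $\le 0$, contradicting its positivity. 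Hence $a_i - a_j > b_i - b_j$.

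I do not expect a genuine obstacle here; the one point requiring a little care is that the chain of inequalities relies on $b_i - b_j > 0$, so that scaling $B \le A$ by it preserves the direction — this is exactly the strict ordering $b_1 < \cdots < b_{k-1}$ in \eqref{condition} — and on the fact that equality $A = B$ is permitted, which is why $A \ge B$ (rather than $A > B$) is all that is used. One could instead argue from \eqref{basic2} and \eqref{basic2'}, which give $\tfrac{a_i - a_j}{b_i - b_j} = \tfrac{a_i a_j}{b_i b_j}\cdot\tfrac{B}{A}$, but there the two factors on the right pull in opposite directions ($a_i a_j > b_i b_j$ while $B \le A$), so the subtraction argument above is cleaner and I would use it.
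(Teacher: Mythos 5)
Your proof is correct, and for the substantive second half it takes a genuinely different route from the paper. For $a_i > b_i$ you argue exactly as the paper does (from $a_i B - b_i A = a_i b_i > 0$ and $A \ge B$, i.e.\ the content of \eqref{temp1}). For $a_i - a_j > b_i - b_j$, however, the paper does not subtract the linear identities: it uses the ratio relation $\frac{a_i - a_j}{b_i - b_j} = \frac{B}{A}\cdot\frac{a_i}{b_i}\cdot\frac{a_j}{b_j}$ coming from \eqref{basic2}--\eqref{basic2'} (equivalently \eqref{basic3}--\eqref{basic3'}), together with the \emph{sharpened} form of the first part, $\frac{a_i}{b_i} = \frac{A+a_i}{B} > \frac{A}{B}$, so that the two factors exceeding $A/B$ overpower the single factor $B/A$ and give $\frac{a_i-a_j}{b_i-b_j} > \frac{A}{B} \ge 1$. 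Your closing remark correctly diagnoses why the naive version of that argument (using only $a_ia_j > b_ib_j$) stalls, and \eqref{temp1} is precisely how the paper gets around it. Your subtraction-plus-contradiction argument, $(a_i-a_j)B - (b_i-b_j)A = a_ib_i - a_jb_j > 0$ while $a_i - a_j \le b_i - b_j$ would force the left side $\le 0$, is more elementary: it needs only the basic identity $a_iB - b_iA = a_ib_i$ and the ordering in \eqref{condition}, and bypasses the skew/ratio machinery entirely. What the paper's route buys in exchange is the slightly stronger inequality $\frac{a_i-a_j}{b_i-b_j} > \frac{A}{B}$, parallel to \eqref{temp1}, though only the stated form of the lemma is used later, so nothing downstream is lost by your approach.
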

\begin{proof}
Equation \eqref{basic1} implies
\begin{equation} \label{temp1}
\frac{a_i}{b_i} = \frac{A + a_i}{B} > \frac{A}{B} \ge 1
\end{equation}
as $A \ge B$. This gives the first half of the lemma. Note: Using \eqref{basic1'} instead, one can obtain
\begin{equation} \label{temp2}
\frac{b_i}{a_i} = \frac{B - b_i}{A} < \frac{B}{A} \le 1.
\end{equation}
From \eqref{basic3} and \eqref{basic3'}, we have
\[
\frac{B}{A} = \frac{b_i b_j (a_i - a_j)}{a_i a_j (b_i - b_j)} \; \; \text{ or } \; \; \frac{a_i - a_j}{b_i - b_j} = \frac{B}{A} \cdot \frac{a_i}{b_i} \cdot \frac{a_j}{b_j}.
\]
Applying \eqref{temp1} to the above equation, we get
\[
\frac{a_i - a_j}{b_i - b_j} > \frac{B}{A} \cdot \frac{A}{B} \cdot \frac{A}{B} = \frac{A}{B} \ge 1
\]
which gives the second half of the lemma as $b_i > b_j$.
\end{proof}

Next, we notice two simple relations among the skews $D_{21}, D_{31}, D_{32}$.
\begin{lem} \label{lem3}
One has the identities: $D_{31} a_2 - D_{21} a_3 = D_{32} a_1$ and $\, D_{31} b_2 - D_{21} b_3 = D_{32} b_1$.
\end{lem}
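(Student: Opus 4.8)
The plan is to observe that, once the definition \eqref{disc} of the skews is unravelled, both claimed identities are purely formal polynomial identities in $a_1,a_2,a_3,b_1,b_2,b_3$, so they require none of the factorization hypotheses. For the first identity I would substitute $D_{31}=a_3b_1-a_1b_3$ and $D_{21}=a_2b_1-a_1b_2$ directly into $D_{31}a_2-D_{21}a_3$, expand, and notice that the two $a_2a_3b_1$ terms cancel, leaving $a_1a_3b_2-a_1a_2b_3=a_1(a_3b_2-a_2b_3)=a_1D_{32}$. The second identity is entirely analogous: expanding $D_{31}b_2-D_{21}b_3=(a_3b_1-a_1b_3)b_2-(a_2b_1-a_1b_2)b_3$ the two $a_1b_2b_3$ terms cancel and one is left with $a_3b_1b_2-a_2b_1b_3=b_1(a_3b_2-a_2b_3)=b_1D_{32}$. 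This is the whole argument; it is just careful bookkeeping.

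As an alternative, more structural derivation I could instead use the formulas \eqref{basic3} and \eqref{basic3'} already in hand. Multiplying the first identity through by $A$ and applying $A D_{ij}=a_ia_j(b_j-b_i)$ (with the indices in the order $j<i$ in which these were derived) turns the left side into $a_1a_2a_3\big[(b_3-b_1)-(b_2-b_1)\big]=a_1a_2a_3(b_3-b_2)=a_1\cdot A D_{32}$, and cancelling $A$ gives the claim. Symmetrically, multiplying the second identity by $B$ and using $B D_{ij}=b_ib_j(a_i-a_j)$ reduces it to $b_1b_2b_3\big[(a_3-a_1)-(a_2-a_1)\big]=b_1b_2b_3(a_3-a_2)=b_1\cdot B D_{32}$. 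I would present the first (formal) version as the main proof, since it needs no hypotheses and makes clear that the statement is nothing but the $3\times 3$ Laplace/Plücker relation $a_1D_{32}-a_2D_{31}+a_3D_{21}=0$ (the expansion of a determinant with a repeated row), with the companion relation for the $b_i$ obtained by the same mechanism.

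I do not expect any real obstacle here: the only points requiring care are the antisymmetry convention $D_{ji}=-D_{ij}$ and, if one takes the second route, applying \eqref{basic3}/\eqref{basic3'} with the index order ($j<i$) in which they were stated so that no spurious sign is introduced.
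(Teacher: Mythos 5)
Your primary argument (substitute the definitions of $D_{31}$, $D_{21}$, expand, and cancel the $a_2a_3b_1$ resp.\ $a_1b_2b_3$ terms) is exactly the paper's proof, so the proposal is correct and takes the same approach. One small remark on your alternative route: the relation from \eqref{basic3'} is $A\,D_{ij}=a_ia_j(b_i-b_j)$ for $j<i$, not $a_ia_j(b_j-b_i)$ as you wrote, though the bracket $\bigl[(b_3-b_1)-(b_2-b_1)\bigr]$ in your computation shows you in fact used the correct sign.
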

\begin{proof}
From the definition of $D_{ij}$'s, we have
\[
D_{31} a_2 - D_{21} a_3 = (a_3 b_1 - a_1 b_3) a_2 - (a_2 b_1 - a_1 b_2) a_3 =  a_1 a_3 b_2 - a_1 a_2 b_3 = a_1 D_{32}.
\]
This gives the first identity. The second one follows in a similar manner.
\end{proof}

\begin{lem} \label{lem4}
We have the inequality $D_{31} > D_{21}$.
\end{lem}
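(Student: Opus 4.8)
The plan is to derive $D_{31}>D_{21}$ directly from the explicit formulas for $B$ (or $A$) in terms of the skews, using only the monotonicity of the $a_i$'s and $b_i$'s. Concretely, I would take the first two expressions in \eqref{basic3},
\[
B = \frac{b_1 b_2 (a_2 - a_1)}{D_{21}} = \frac{b_1 b_3 (a_3 - a_1)}{D_{31}},
\]
and solve for the ratio to get $\dfrac{D_{21}}{D_{31}} = \dfrac{b_1 b_2 (a_2 - a_1)}{b_1 b_3 (a_3 - a_1)} = \dfrac{b_2 (a_2 - a_1)}{b_3 (a_3 - a_1)}$. Now $0 < b_2 < b_3$ and, since $a_1 < a_2 < a_3$, also $0 < a_2 - a_1 < a_3 - a_1$; multiplying these two strict inequalities between positive quantities gives $b_2(a_2-a_1) < b_3(a_3-a_1)$, hence $D_{21}/D_{31} < 1$, i.e. $D_{31} > D_{21}$. (One could equally well use \eqref{basic3'}, which yields $D_{21}/D_{31} = a_2(b_2-b_1)/\bigl(a_3(b_3-b_1)\bigr) < 1$ by the same reasoning.)

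Alternatively — and perhaps more cleanly — I would invoke Lemma \ref{lem3}, which gives the identity $D_{31} a_2 = D_{21} a_3 + D_{32} a_1$. Since $D_{32} > 0$ and $a_1 > 0$, the last term is positive, so $D_{31} a_2 > D_{21} a_3$; and since $a_3 > a_2 > 0$ we get $D_{31} a_2 > D_{21} a_3 > D_{21} a_2$, and dividing through by $a_2 > 0$ yields $D_{31} > D_{21}$.

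There is essentially no hard step here: the only thing to be careful about is that all the relevant quantities ($D_{ij}$ for $j<i$, the $a_i$, the $b_i$, and the differences $a_i - a_j$, $b_i - b_j$) are strictly positive, which is exactly what was established in the discussion preceding \eqref{basic3} together with Lemma \ref{lem1}. Given that, the inequality is immediate from monotonicity.
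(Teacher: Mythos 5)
Your proposal is correct, and your second argument (via Lemma \ref{lem3}: $D_{31}a_2 = D_{21}a_3 + D_{32}a_1 > D_{21}a_3 > D_{21}a_2$) is exactly the paper's proof. The first argument, comparing the two expressions for $B$ in \eqref{basic3} to get $D_{21}/D_{31} = b_2(a_2-a_1)/\bigl(b_3(a_3-a_1)\bigr) < 1$, is an equally valid and equally elementary variation, relying only on the positivity facts established before \eqref{basic3}.
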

\begin{proof}
Since $D_{32}, a_1 > 0$, Lemma \ref{lem3} yields $D_{31} a_2 - D_{21} a_3 > 0$ or $D_{31} a_2 > D_{21} a_3$. Hence, we have $D_{31} > D_{21} a_3 / a_2 > D_{21}$ as $a_3 > a_2$ by Lemma \ref{lem1}.
\end{proof}

\begin{lem} \label{lem5}
We have the inequality $D_{32} + D_{21} > D_{31}$.    
\end{lem}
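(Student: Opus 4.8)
The plan is to turn the asserted inequality into a statement purely about the $a_i$'s using Lemma~\ref{lem3}, and then feed in the expressions for $B$ from \eqref{basic3}. Since $a_1 > 0$, the claim $D_{32} + D_{21} > D_{31}$ is equivalent to $a_1 D_{32} > a_1 D_{31} - a_1 D_{21}$. Substituting the first identity of Lemma~\ref{lem3}, namely $a_1 D_{32} = D_{31} a_2 - D_{21} a_3$, and rearranging, this becomes
\[
D_{31}(a_2 - a_1) > D_{21}(a_3 - a_1),
\]
so it suffices to prove this reformulated inequality.

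To do so, I would use \eqref{basic3}, which (together with the positivity of the skews noted just after \eqref{disc}) gives $D_{21} = b_1 b_2 (a_2 - a_1)/B$ and $D_{31} = b_1 b_3 (a_3 - a_1)/B$. Hence
\[
D_{31}(a_2 - a_1) = \frac{b_1 b_3 (a_2-a_1)(a_3-a_1)}{B}, \qquad D_{21}(a_3 - a_1) = \frac{b_1 b_2 (a_2-a_1)(a_3-a_1)}{B},
\]
and since $b_2 < b_3$ by \eqref{condition} while every remaining factor is positive, the reformulated inequality follows immediately, completing the proof.

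I do not expect a genuine obstacle here — the argument is a couple of lines of algebra — but the step deserving the most attention is the bookkeeping of signs: one needs $a_1 > 0$ to pass to the reformulated inequality, and the positivity of $B$, $b_1$, and of the differences $a_2 - a_1$, $a_3 - a_1$ (from Lemma~\ref{lem1} and the discussion following \eqref{disc}) to cancel and compare freely. It is also worth remarking that Lemma~\ref{lem3} by itself is insufficient, since its identities are formal and hold for arbitrary $a_i, b_i$; the arithmetic content enters through \eqref{basic3}, i.e.\ the fact that the single integer $B$ is common to all three factorizations $n = (A+a_i)(B-b_i)$. As an alternative route, expanding $D_{32} + D_{21} - D_{31}$ directly shows the lemma is equivalent to $(a_3 - a_2)(b_2 - b_1) > (a_2 - a_1)(b_3 - b_2)$, and dividing \eqref{basic3} by \eqref{basic3'} for the index pairs $(3,2)$ and $(2,1)$ reduces this to $a_3 b_1 > a_1 b_3$, that is, to $D_{31} > 0$.
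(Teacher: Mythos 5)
Your proof is correct and is essentially the paper's own argument in a different order: both reduce the claim (via the first identity of Lemma~\ref{lem3} and $a_1>0$) to the intermediate inequality $D_{31}(a_2-a_1) > D_{21}(a_3-a_1)$, and both establish that inequality from \eqref{basic3} together with $b_3 > b_2$ and the positivity of the remaining factors. The paper merely arranges this as a cross-multiplication and subtraction rather than substituting $D_{21} = b_1b_2(a_2-a_1)/B$ and $D_{31} = b_1b_3(a_3-a_1)/B$ directly, so no substantive difference.
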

\begin{proof}
From \eqref{basic3}, we have
\[
\frac{b_2 b_1 (a_2 - a_1)}{D_{21}} = \frac{b_3 b_1 (a_3 - a_1)}{D_{31}} \; \; \text{ or } \; \; D_{31} b_2 (a_2 - a_1) = D_{21} b_3 (a_3 - a_1).
\]
Subtracting $D_{21} b_2 (a_3 - a_1)$ from both sides, we get
\[
b_2 (D_{31} a_2 - D_{31} a_1 - D_{21} a_3 + D_{21} a_1) = D_{21} (b_3 - b_2) (a_3 - a_1) > 0.
\]
From Lemma \ref{lem3}, we have $-D_{21} a_3 = D_{32} a_1 - D_{31} a_2$. Substituting this into the above, we get
\[
b_2 (D_{31} a_2 - D_{31} a_1 + D_{32} a_1 - D_{31} a_2 + D_{21} a_1) = b_2 a_1 (D_{32} + D_{21} - D_{31}) > 0
\]
which gives the lemma as $b_2$ and $a_1$ are positive. 
\end{proof}

\begin{lem} \label{lem6}
Let $0 < a_1 < a_2 < a_3 \le C$ and $0 < b_1 < b_2 < b_3 \le C$. Suppose $a_i = (1 + \lambda) a_j$ for some $1 \le j < i \le 3$ and $\lambda > 0$. Then we have
\[
B \le A < \frac{\lambda C^3}{D_{ij} (1 + \lambda)^2}.
\]
In particular, if one of $D_{21}, D_{31}, D_{32}$ is greater than $5$, we have $B \le A \le \frac{C^3}{24} < 0.042 C^3$.
\end{lem}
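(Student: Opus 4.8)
The plan is to begin from the identities \eqref{basic3'}, which already express $A$ in terms of a single pair of indices, and then feed in Lemma \ref{lem1}. Fix the pair $(i,j)$, $1 \le j < i \le 3$, for which the hypothesis $a_i = (1+\lambda)a_j$ is given. Equation \eqref{basic3'} reads $A = a_i a_j (b_i - b_j)/D_{ij}$, and since $D_{ij} > 0$ it is enough to bound the numerator from above.

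For the numerator I would use the second half of Lemma \ref{lem1}: as $j < i$ we have $b_i - b_j < a_i - a_j = \lambda a_j$, so $A < \lambda a_i a_j^2 / D_{ij}$. Next, eliminate $a_j$ in favour of $a_i$ via the hypothesis $a_j = a_i/(1+\lambda)$, which gives $a_i a_j^2 = a_i^3/(1+\lambda)^2 \le C^3/(1+\lambda)^2$ because $a_i \le C$. Substituting produces $A < \lambda C^3/\bigl(D_{ij}(1+\lambda)^2\bigr)$, the claimed bound; the inequality $B \le A$ is simply part of the standing hypothesis \eqref{condition}. The one place to be careful is the exponent bookkeeping — one must substitute for $a_j$ rather than $a_i$ so that the powers of $(1+\lambda)$ combine to give $(1+\lambda)^2$ in the denominator — and to notice that strictness is already supplied by Lemma \ref{lem1}.

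For the ``in particular'' clause, the key elementary inequality is $\lambda/(1+\lambda)^2 \le 1/4$ for all $\lambda > 0$, which is just $(1-\lambda)^2 \ge 0$ rearranged. Hence the first part yields $A < C^3/(4D_{ij})$. Since every skew $D_{ij} = a_i b_j - a_j b_i$ is a positive integer, the assumption that one of $D_{21}, D_{31}, D_{32}$ exceeds $5$ forces that skew to be $\ge 6$; applying the first part to the corresponding pair (with $\lambda = (a_i-a_j)/a_j$) gives $A < C^3/24$, and $1/24 < 0.042$ completes the proof.

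I do not expect a genuine obstacle here: the argument is a short chain of substitutions together with one single-variable optimization. The only subtleties worth flagging are getting the correct maximum of $\lambda/(1+\lambda)^2$ and upgrading ``greater than $5$'' to ``$\ge 6$'' via integrality of the skews — both routine.
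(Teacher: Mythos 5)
Your proposal is correct and follows essentially the same route as the paper: bound $A = a_i a_j(b_i-b_j)/D_{ij}$ via Lemma \ref{lem1}, substitute $a_j = a_i/(1+\lambda)$ with $a_i \le C$, and then use $(1+\lambda)^2 \ge 4\lambda$ (the paper phrases it as AM--GM) together with $D_{ij} \ge 6$ for the ``in particular'' clause. No gaps; your explicit note that ``greater than $5$'' upgrades to ``$\ge 6$'' by integrality is only implicit in the paper's proof.
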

\begin{proof}
Since $a_i = (1 + \lambda) a_j\leq C$, we have $a_j\leq \frac{C}{(1+\lambda)}$. Also, $a_i-a_j=(1+\lambda)a_j-a_j=\lambda a_j$. 
Thus, applying \eqref{basic3'} and Lemma \ref{lem1}, we obtain
    \begin{equation*}
            B \le A = \frac{a_ia_j(b_i-b_j)}{D_{ij}} < \frac{a_i a_j (a_i-a_j)}{D_{ij}} =\frac{a_ia_j^2\lambda}{D_{ij}} \leq \frac{\lambda C^3}{D_{ij}(1+\lambda)^2}
    \end{equation*}
which yields the first half of the lemma. By the arithmetic-mean and geometric-mean (AM-GM) inequality $(a + b) / 2 \ge \sqrt{a b}$, we have $(1 + \lambda)^2 \ge 4 \lambda$. Hence, if some $D_{ij} > 5$, the above inequality implies $B \le A < \frac{\lambda C^3}{6 \cdot (4 \lambda)} = \frac{C^3}{24}$, and we have the entire lemma.
\end{proof}

Another consequence of the above lemma is that we can bound $R_4$ for a special instance.

\begin{lem} \label{lem7}
If $D_{31} = 5$ and $D_{32} = 4$, we have $R_4 \le 0.04$.
\end{lem}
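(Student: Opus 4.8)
The plan is to combine the geometric estimates of Lemma~\ref{lem6} for all three pairs of indices with the structural identities of Lemma~\ref{lem3}, reduce to cases according to the value of $D_{21}$, and then push the one hard case ($D_{21}=2$) through by using the arithmetic consistency of the four factorizations.

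First, by Lemmas~\ref{lem4} and~\ref{lem5} we have $D_{31}-D_{32}<D_{21}<D_{31}$, i.e. $1<D_{21}<5$, so $D_{21}\in\{2,3,4\}$. Put $y:=a_1/a_3\in(0,1)$. Lemma~\ref{lem3} gives $5a_2-D_{21}a_3=4a_1$, hence $x:=a_2/a_3=(D_{21}+4y)/5$. Applying Lemma~\ref{lem6} to the pair $(i,j)=(3,2)$ (skew $D_{32}=4$) and to $(3,1)$ (skew $D_{31}=5$) yields
\[
R_4<\frac{x(1-x)}{4}\qquad\text{and}\qquad R_4<\frac{y(1-y)}{5}.
\]
Since $u(1-u)$ is decreasing on $[\tfrac12,1]$, the first bound gives $R_4<\tfrac14\cdot\tfrac45\cdot\tfrac15=\tfrac1{25}=0.04$ as soon as $x\ge\tfrac45$; and $x=(D_{21}+4y)/5\ge\tfrac45$ holds automatically when $D_{21}=4$, when $D_{21}=3$ and $y\ge\tfrac14$, and when $D_{21}=2$ and $y\ge\tfrac12$. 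In the leftover sub-case $D_{21}=3$, $y<\tfrac14$, the second bound together with the monotonicity of $u(1-u)$ on $[0,\tfrac12]$ gives $R_4<\tfrac15\cdot\tfrac14\cdot\tfrac34=\tfrac3{80}<0.04$. This disposes of $D_{21}\in\{3,4\}$ entirely, and of $D_{21}=2$ with $y\ge\tfrac12$.

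The one remaining case, $D_{21}=2$ with $y<\tfrac12$, is the crux: there the two bounds above only give $R_4<0.0487$, which is not good enough. To do better I would use that the four factorizations are genuinely compatible, i.e. that the three expressions for $A$ in~\eqref{basic3'} coincide. Combining this with $5a_2-2a_3=4a_1$ and $5b_2-2b_3=4b_1$ (Lemma~\ref{lem3}), one solves for $b_1,b_2$ and $A$ in terms of $a_1,a_2,a_3$, obtaining $A=2a_1a_2a_3/K$ with
\[
K:=24a_1^2-40a_1a_2+15a_2^2,\qquad b_1=\frac{10(3a_2-4a_1)}{K},\qquad b_2=\frac{8(5a_2-6a_1)}{K},
\]
and hence the exact formula
\[
R_4=\frac{8a_1a_2}{K\,(5a_2-4a_1)^2}=\frac{4\,y(2y+1)}{5K}.
\]
Positivity of $b_1$ forces $K>0$, which is equivalent to $a_1/a_2<\tfrac{10-\sqrt{10}}{12}$, i.e. to $y<2-\tfrac{\sqrt{10}}{2}=0.4189\ldots$; consequently $R_4\le 0.04$ reduces to the single inequality $K\ge 20\,y(2y+1)=25a_1a_2/a_3^2$.

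To establish this last inequality I would exploit three further constraints on the integer pair $(a_1,a_2)$: (a) $1\le b_1<a_1$ and $1\le b_2<a_2$, which force $K$ to be fairly large; (b) $B\le A$, which, after substituting the formulas above, becomes a polynomial inequality in $a_1,a_2$; and (c) the arithmetic of $K$ itself: completing the square gives the generalized Pell equation $2Q^2-5a_2^2=3K$ with $Q:=5a_2-6a_1>0$, so $K\equiv0,1,4\pmod5$, and this together with congruence conditions modulo $8$ and the divisibility requirements $K\mid10(3a_2-4a_1)$, $K\mid8(5a_2-6a_1)$ eliminates the small values of $K$ that would otherwise permit $R_4>0.04$ near $y=2-\tfrac{\sqrt{10}}{2}$. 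The main obstacle is exactly step (c): the purely geometric inequalities (Lemma~\ref{lem6} for all three pairs, plus $B\le A$) stop at roughly $0.0487$ in this case, and one must use the integrality of $a_1,a_2,b_1,b_2$—equivalently, the sparseness of the solutions of $2Q^2-5a_2^2=3K$—to close the remaining gap down to $0.04$.
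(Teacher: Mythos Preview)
Your argument has a genuine gap: the sub-case $D_{21}=2$, $y<\tfrac12$ is only sketched. You correctly note that the Lemma~\ref{lem6} bounds alone stop at about $0.0487$ there, derive the exact formula $R_4=2a_1a_2/(Ka_3^2)$ with $K=24a_1^2-40a_1a_2+15a_2^2$, and reduce the claim to the inequality $K\ge 20y(2y+1)$ (incidentally, this equals $50a_1a_2/a_3^2$, not $25a_1a_2/a_3^2$). But step~(c) --- eliminating the small values of $K$ via congruences, the divisibilities $K\mid 10(3a_2-4a_1)$ and $K\mid 8(5a_2-6a_1)$, and the Pell relation $2Q^2-5a_2^2=3K$ --- is never carried out. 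Phrases like ``I would exploit'' and ``one must use the integrality\ldots to close the remaining gap'' describe a plan, not a proof.

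The paper's own proof is a two-liner: from Lemma~\ref{lem3} it writes down the linear relation among $a_1,a_2,a_3$, asserts $a_3<\tfrac54 a_2$, sets $a_3=(1+\lambda)a_2$ with $\lambda<\tfrac14$, and applies Lemma~\ref{lem6} to the pair $(3,2)$ with $D_{32}=4$ to get $R_4\le\tfrac{1/4}{4(5/4)^2}=0.04$ with no case split on $D_{21}$. Note, however, that Lemma~\ref{lem3} actually gives $4a_1+D_{21}a_3=5a_2$ (the paper's displayed equation has the coefficients of $a_1$ and $a_3$ interchanged), from which $a_3<\tfrac54 a_2$ follows only when $D_{21}=4$; for $D_{21}=2$ one can take $(a_1,a_2,a_3,b_1,b_2,b_3)=(25,44,60,8,14,19)$, which satisfies all three skew conditions yet has $a_3/a_2=15/11>5/4$. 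So your careful case analysis is in fact warranted; the cleanest way to finish the outstanding case is to run the general Pell machinery of Section~4 for $(D_{21},D_{31},D_{32})=(2,5,4)$ and verify via Lemma~\ref{lem14} that every solvable instance yields $R_4<0.04$.
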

\begin{proof}
Lemma \ref{lem3} gives us the equation $D_{21} a_1 + 4 a_3 = 5 a_2$. This implies $a_3 < \frac{5}{4} a_2$ as $a_1, D_{21} > 0$. Hence, if $a_3 = (1 + \lambda) a_2$, then $0 < \lambda < \frac{1}{4}$. Applying Lemma \ref{lem6} and recalling $C = a_3$, we have
\[
A \le \frac{\lambda a_3^3}{D_{32} (1 + \lambda)^2}.
\]
By simple calculus, one can check that the function $\frac{\lambda}{(1 + \lambda)^2}$ is increasing on the interval $(0, 1)$. Thus, we obtain
\[
R_4 = \frac{A}{a_3^3} \le \frac{1/4}{4 (1 + 1/4)^2} = 0.04.
\]
\end{proof}

Finally, we explore what happens if we have equal skews.

\begin{lem} \label{lem8}
Suppose $D_{31}=D_{32}$ and recall \[ n \;=\;A B
\;=\;(A+a_1)(B-b_1)
\;=\;(A+a_2)(B-b_2)
\;=\;(A+a_3)(B-b_3)
\]
admit four close factorizations. Then, we have the relations
\begin{equation} \label{AtimesA+a3}
    A (A + a_3)\;=\;(A + a_1) (A + a_2) \; \; \text{ and } \; \;  B (B - b_3)\;=\;(B - b_1) (B - b_2).
\end{equation}
Moreover, we have the superior upper bound $B \le A \le C^2 / 4$.
\end{lem}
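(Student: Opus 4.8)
The plan is to first pin down the two algebraic identities in \eqref{AtimesA+a3} and then turn the first of them into an explicit formula for $A$ that can be optimized directly.

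I would start with the cheap observation that, given the four factorizations, the two identities in \eqref{AtimesA+a3} are \emph{equivalent} to one another, so only one needs to be proved. Indeed, multiplying $A(A+a_3) = (A+a_1)(A+a_2)$ by the positive quantity $(B-b_1)(B-b_2)$ and using $(A+a_1)(B-b_1) = (A+a_2)(B-b_2) = n$ turns the right-hand side into $n^2$; the left-hand side $A(A+a_3)(B-b_1)(B-b_2)$ then equals $n^2 = (AB)(A+a_3)(B-b_3)$, and cancelling the positive factor $A(A+a_3)$ leaves $(B-b_1)(B-b_2) = B(B-b_3)$, every step being reversible. Expanding, the first identity is moreover equivalent to the clean statement $A\,(a_3 - a_1 - a_2) = a_1 a_2$, which is what I would actually prove.

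To prove $A(a_3 - a_1 - a_2) = a_1 a_2$, write $D := D_{31} = D_{32}$, which is positive by the discussion preceding Lemma \ref{lem1}. Applying Lemma \ref{lem3} with $D_{31} = D_{32}$ gives $D(b_2 - b_1) = D_{21} b_3$ (and, symmetrically, $D(a_2 - a_1) = D_{21} a_3$). Substituting $b_2 - b_1 = D_{21} b_3 / D$ into the relation $A D_{21} = a_1 a_2 (b_2 - b_1)$ coming from \eqref{basic3'} and cancelling $D_{21}$ yields $A D = a_1 a_2 b_3$. Comparing this with the other expression $A D = A D_{31} = a_1 a_3 (b_3 - b_1)$ from \eqref{basic3'} and cancelling $a_1$ gives the key relation $(a_3 - a_2) b_3 = a_3 b_1$. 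Finally, $b_3(a_3 - a_1 - a_2) = (a_3 b_3 - a_2 b_3) - a_1 b_3 = a_3 b_1 - a_1 b_3 = D_{31} = D$, so $A(a_3 - a_1 - a_2) = (a_1 a_2 b_3 / D)\cdot(D / b_3) = a_1 a_2$, as needed. (Alternatively, the same type of manipulation with \eqref{basic3} yields $B D = a_3 b_1 b_2$ together with a companion relation, from which the $B$-identity follows directly without invoking the equivalence above.)

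For the upper bound, note that $A > 0$ and $a_1 a_2 \ge 1$ force the integer $a_3 - a_1 - a_2 = a_1 a_2 / A$ to be positive, hence $\ge 1$. Set $d := a_3 - a_1 - a_2 \ge 1$, so that $a_1 + a_2 = a_3 - d$. By AM--GM, $a_1 a_2 \le (a_1 + a_2)^2/4 = (a_3 - d)^2/4$, whence $A = a_1 a_2 / d \le (a_3 - d)^2/(4d)$. Since $d \mapsto (a_3 - d)^2/(4d)$ is decreasing on $(0, a_3)$, it is largest at $d = 1$, which gives $A \le (a_3 - 1)^2 / 4 < a_3^2/4 \le C^2/4$; together with the standing hypothesis $B \le A$ this proves $B \le A \le C^2/4$. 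I expect the only mildly delicate point to be choosing the right combination of Lemma \ref{lem3} and \eqref{basic3'} in the third paragraph; everything else is routine bookkeeping.
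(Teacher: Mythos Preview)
Your proof is correct. The ingredients are the same as in the paper (Lemma~\ref{lem3}, the formulas in \eqref{basic3'}, and AM--GM), but the algebraic packaging for the identities differs. The paper first rewrites $D_{31}=D_{32}$ as $a_3(b_2-b_1)=b_3(a_2-a_1)$, combines the two expressions for $D_{21}$ in \eqref{basic3}--\eqref{basic3'} into the single ratio identity $\dfrac{b_1}{a_1}\cdot\dfrac{b_2}{a_2}\cdot\dfrac{a_3}{b_3}=\dfrac{B}{A}$, and then substitutes $\dfrac{b_i}{a_i}=\dfrac{B}{A+a_i}$ (resp.\ $\dfrac{a_i}{b_i}=\dfrac{A}{B-b_i}$) from \eqref{temp1}--\eqref{temp2} to obtain both halves of \eqref{AtimesA+a3} at once. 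Your route instead derives $AD=a_1a_2b_3$ and $b_3(a_3-a_1-a_2)=D$ separately and multiplies, getting the expanded form $A(a_3-a_1-a_2)=a_1a_2$ directly; your equivalence observation then covers the $B$-identity. The paper's trick is a bit slicker because it is symmetric in the two identities, while yours has the minor advantage of landing immediately on the expanded equation needed for the bound. For the inequality, both arguments are AM--GM; the paper simply writes $C\ge a_3>a_1+a_2\ge 2\sqrt{a_1a_2}=2\sqrt{A(a_3-a_1-a_2)}\ge 2\sqrt{A}$, whereas you optimize over $d=a_3-a_1-a_2$ and pick up the slightly sharper intermediate bound $A\le (a_3-1)^2/4$.
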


\begin{proof}
First, from $D_{31} = D_{32}$, we have
\begin{equation} \label{D-temp}
a_3 b_1 - a_1 b_3 = a_3 b_2 - a_2 b_3 \; \; \text{ or } \; \; a_3 (b_2 - b_1) = b_3 (a_2 - a_1)
\end{equation}
after some rearrangment. Next, the first equalities of equations \eqref{basic3} and \eqref{basic3'} tell us
\begin{equation*}
    \frac{b_2 b_1 (a_2 - a_1)}{B} = D_{21} = \frac{a_2 a_1 (b_2 - b_1)}{A} \; \; \text{ or } \; \; \frac{b_2 b_1 b_3 (a_2 - a_1)}{B} = \frac{a_2 a_1 b_3 (b_2 - b_1)}{A}.
\end{equation*}
Now, we apply \eqref{D-temp} to the above equation and get
\begin{equation} \label{ABab}
    \frac{b_2 b_1 a_3 (b_2 - b_1)}{B} = \frac{a_2 a_1 b_3 (b_2 - b_1)}{A} \; \; \text{ or } \; \; \frac{b_2}{a_2} \cdot \frac{b_1}{a_1} \cdot \frac{a_3}{b_3} = \frac{B}{A}.
\end{equation}
Furthermore, applying \eqref{temp1} to \eqref{ABab}, we have 
\begin{equation*}
    \frac{B}{A+a_2}\cdot \frac{B}{A+a_1}\cdot \frac{A+a_3}{B} = \frac{B}{A}
\end{equation*}
which gives the first half of \eqref{AtimesA+a3}. Similarly, applying \eqref{temp2} to \eqref{ABab}, we have the second half of \eqref{AtimesA+a3}. Finally, expanding the first equation of \eqref{AtimesA+a3} and simplifying, we obtain
\[
A (a_3 - a_2 - a_1) = a_1 a_2 > 0,
\]
and $a_3 \ge a_1 + a_2 + 1$. By the AM-GM inequality, we have
\[
C \ge a_3 > a_1 + a_2 \ge 2\sqrt{a_1a_2} = 2\sqrt{A(a_3-a_2-a_1)} \ge 2\sqrt{A}
\]
which implies the last part of the lemma.
\end{proof}

\section{The situation when $D_{21} = 1$}

First, we consider the special case $D_{21} = 1$. It forms the prototype of subsequent argument. One may restrict to $2 \le D_{31}, D_{32} \le 5$ by Lemmas \ref{lem4} and \ref{lem6}. Also, Lemmas \ref{lem5} and \ref{lem8} allow us to focus on $D_{32} > D_{31} - 1$ with $D_{31} \neq D_{32}$. From \eqref{disc}, we have
\begin{equation} \label{start1k}
\left\{ \begin{array}{l} a_2 b_1 - b_2 a_1 = 1, \\
a_3 b_1 - b_3 a_1 = D_{31}.
\end{array} \right.
\end{equation}
The first equation in \eqref{start1k} implies $\text{gcd}(b_1, b_2) = 1$. From \eqref{basic3}, we have
\begin{equation} \label{D32D31}
D_{32} b_1 (a_3 - a_1) = D_{31} b_2 (a_3 - a_2).
\end{equation}
Equation \eqref{D32D31} implies $b_1 \mid D_{31} b_2 (a_3 - a_2)$ and $b_2 \mid D_{32} b_1 (a_3 - a_1)$. Since $\text{gcd}(b_1, b_2) = 1$, we must have $b_1 \mid D_{31} (a_3 - a_2)$ and $b_2 \mid D_{32} (a_3 - a_1)$ by Euclid's lemma. Hence, we can write
\[
b_1 k_1 = D_{31} (a_3 - a_2) \; \; \text{ and } \; \; b_2 k_2 = D_{32} (a_3 - a_1).
\]
for some integers $k_1$ and $k_2$. Putting these into \eqref{D32D31}, we can deduce $k_1 = k_2 = k > 0$,
\begin{equation} \label{k-eq}
b_1 k = D_{31} (a_3 - a_2), \; \; \text{ and } \; \; b_2 k = D_{32} (a_3 - a_1).
\end{equation}
By a similar argument involving \eqref{basic3'} instead of \eqref{basic3}, we also have
\begin{equation} \label{m-eq}
a_1 m = D_{31} (b_3 - b_2) \; \; \text{ and } \; \; a_2 m = D_{32} (b_3 - b_1)
\end{equation}
for some integer $m > 0$. Next, by Lemma \ref{lem1}, we observe the following inequality between $k$ and $m$:
\begin{equation} \label{km-ineq-1}
k = \frac{D_{31} (a_3 - a_2)}{b_1} > \frac{D_{31} (a_3 - a_2)}{a_1} > \frac{D_{31}(b_3-b_2)}{a_1}=m.
\end{equation}
Multiplying the second equation in \eqref{k-eq} with the first equation in \eqref{m-eq}, we get
\[
b_2 a_1 k m = D_{32} D_{31} (a_3 - a_1) (b_3 - b_2).
\]
On the other hand, from the proof of Lemma \ref{lem5}, we have
\[
b_2 a_1 (D_{32} - D_{31} + 1) = (b_3 - b_2)(a_3 - a_1).
\]
Consequently, we have
\begin{equation} \label{km}
k m = D_{32} D_{31} (D_{32} - D_{31} + 1).
\end{equation}

Now, we apply Lemma \ref{lem3} and get
\begin{equation*}
a_3 = D_{31} a_2 - D_{32} a_1 \; \; \text{ and } \; \; b_3 = D_{31} b_2 - D_{32} b_1.
\end{equation*}
Putting these equations into the first equations in \eqref{k-eq} and \eqref{m-eq}, we obtain
\[
D_{31} \bigl( (D_{31} - 1) a_2 - D_{32} a_1 \bigr) = k b_1 \; \; \text{ and } \; \; D_{31} \bigl( (D_{31} - 1) b_2 - D_{32} b_1 \bigr)= m a_1.
\]
Solving for $a_2$ and $b_2$, we have
\[
a_2 = \frac{k b_1 + D_{31} D_{32} a_1}{D_{31} (D_{31} - 1)} \; \; \text{ and } \; \; b_2 = \frac{m a_1 + D_{31} D_{32} b_1}{D_{31} (D_{31} - 1)}.
\]
Finally, substituting the above expressions into $a_2 b_1 - a_1 b_2 = 1$, we transform our original question to the following generalized Pell equation:
\begin{equation} \label{pell-1}
k b_1^2 - m a_1^2 = D_{31} (D_{31} - 1).
\end{equation}

As a result, we have the following six cases as $2 \le D_{31} \le D_{32} \le 5$ and $D_{31} \neq D_{32}$, and we drop the case $D_{31} = 5$ and $D_{32} = 4$ in light of Lemma \ref{lem7}. In many circumstances, the equation \eqref{pell-1} has no integer solution by modular arithmetic.

\bigskip

Case 1: $D_{31} = 2$ and $D_{32} = 3$. Then equation \eqref{km} gives $k m = 12$.

\begin{center}
\begin{tabular}{|c|c|c|c|}
\hline $(k, m)$ as $k > m$ from \eqref{km-ineq-1} & Pell equation \eqref{pell-1} & Solution? \\
\hline $(12,1)$ & $12b^2-a^2=2$ & No, by $(\bmod \, 4)$\\
\hline $(6,2)$ & $6b^2-2a^2=2$ & No, by $(\bmod \, 3)$\\
\hline $(4,3)$ & $4b^2-3a^2=2$ & No, by $(\bmod \, 4)$\\
\hline
\end{tabular}
\newline \small{Table 1: List out all Pell-type equations \eqref{pell-1} with solutions when $D_{31} = 2$, $D_{32} = 3$.}
\end{center}

\noindent For example, the first equation above $12 b^2 - a^2 = 2$ becomes $0 - a^2 \equiv 2$ (mod $4$) which has no solution as $x^2 \equiv 0$ or $1$ (mod $4$). Similarly, the second equation above $6 b^2 - 2 a^2 = 2$ becomes $0 + a^2 \equiv 2$ (mod $3$) which no solution as $x^2 \equiv 0$ or $1$ (mod $3$).

\bigskip

Case 2: $D_{31} = 2$ and $D_{32} = 4$. Then equation \eqref{km} gives $k m = 24$.

\begin{center}
\begin{tabular}{|c|c|c|c|}
\hline $(k, m)$ as $k > m$ & Pell equation \eqref{pell-1} & Solution? \\
\hline $(24,1)$ & $24b_1^2-a_1^2=2$ & No, by $(\bmod \, 4)$\\
\hline $(12,2)$ & $12b_1^2-2a_1^2=2$ & No, by $(\bmod \, 3)$\\
\hline $(8,3)$ & $8b_1^2-3a_1^2=2$ & No, by $(\bmod \, 4)$\\
\hline $(6,4)$ & $6b_1^2-4a_1^2=2$ & $b_1=1$ and $a_1=1$\\
\hline
\end{tabular}
\newline \small{Table 2: List out all Pell-type equations \eqref{pell-1} with solutions when $D_{31} = 2$, $D_{32} = 4$.}
\end{center}

Case 3: $D_{31} = 2$ and $D_{32} = 5$. Then equation \eqref{km} gives $k m = 40$.

\begin{center}
\begin{tabular}{|c|c|c|c|}
\hline $(k, m)$ as $k > m$ & Pell equation \eqref{pell-1} & Solution? \\
\hline $(40,1)$ & $40b_1^2-a_1^2=2$ & No, by $(\bmod \, 4)$\\
\hline $(20,2)$ & $20b_1^2-2a_1^2=2$ & $b_1=1$ and $a_1=3$\\
\hline $(10,4)$ & $10b_1^2-4a_1^2=2$ & No, by $(\bmod \, 5)$\\
\hline $(8,5)$ & $8b_1^2-5a_1^2=2$ & No, by $(\bmod \, 4)$\\
\hline
\end{tabular}
\newline \small{Table 3: List out all Pell-type equations \eqref{pell-1} with solutions when $D_{31} = 2$, $D_{32} = 5$.}
\end{center}

\noindent The third equation above $10b_1^2-4a_1^2 = 2$ becomes $0 + a_1^2 \equiv 2$ (mod $5$) which has no solution as $x^2 \equiv 0, 1, 4$ (mod $5$). The conclusion for the first and last equations follows similar argument as in case 1. By similar reasoning, we have the following two tables.

\bigskip

Case 4: When $D_{31} = 3$ and $D_{32} = 4$. Then equation \eqref{km} gives $k m = 24$.

\begin{center}
\begin{tabular}{|c|c|c|c|}
\hline $(k, m)$ as $k > m$ & Pell equation \eqref{pell-1} & Solution? \\
\hline $(24,1)$ & $24b_1^2-a_1^2=6$ & No, by $(\bmod \, 4)$\\
\hline $(12,2)$ & $12b_1^2-2a_1^2=6$ & No, by $(\bmod \, 5)$\\
\hline $(8,3)$ & $8b_1^2-3a_1^2=6$ & No, by $(\bmod \, 4)$\\
\hline $(6,4)$ & $6b_1^2-4a_1^2=6$ & $b_1=5$ and $a_1=6$\\
\hline
\end{tabular}
\newline \small{Table 4: List out all Pell-type equations \eqref{pell-1} with solutions when $D_{31} = 3$, $D_{32} = 4$.}
\end{center}

Case 5: $D_{31} = 3$ and $D_{32} = 5$. Then equation \eqref{km} gives $k m = 45$.

\begin{center}
\begin{tabular}{|c|c|c|c|}
\hline $(k, m)$ as $k > m$ & Pell equation \eqref{pell-1} & Solution? \\
\hline $(45,1)$ & $45b_1^2-a_1^2=6$ & No, by $(\bmod \, 4)$\\
\hline $(15,3)$ & $15b_1^2-3a_1^2=6$ & No, by $(\bmod \, 4)$\\
\hline $(9,5)$ & $9b_1^2-5a_1^2=6$ & No, by $(\bmod \, 4)$\\
\hline
\end{tabular}
\newline \small{Table 5: List out all Pell-type equations \eqref{pell-1} with solutions when $D_{31} = 3$, $D_{32} = 5$.}
\end{center}

%

\section{General Pell-machinery}

In general, the skew $D_{21}$ may not be $1$. Then we cannot conclude $\gcd(b_1, b_2) = \gcd(a_1, a_2) = 1$ and apply Euclid's lemma. In order to extend our previous argument to more general $D_{21}$, we introduce the following notation. Let
\begin{equation} \label{definitions}
d_a := \gcd(a_1, a_2), \; d_b := \gcd(b_1, b_2), \; \left\{ \begin{array}{l}
a_1 := d_a \alpha_1 \\
a_2 := d_a \alpha_2,
\end{array} \right. \; 
\left\{ \begin{array}{l}
b_1 := d_b \beta_1 \\
b_2 := d_b \beta_2,
\end{array} \right.
\end{equation}
Then we have $\gcd(\alpha_1, \alpha_2) = 1 = \gcd(\beta_1, \beta_2)$. Now, we are ready to generalize the argument in the previous section. From \eqref{disc} and \eqref{definitions}, we have
\begin{equation} \label{start1k-g}
\left\{ \begin{array}{l} \alpha_2 \beta_1 - \beta_2 \alpha_1 = \frac{D_{21}}{d_a d_b}, \\
d_b a_3 \beta_1 - d_a b_3 \alpha_1 = D_{31}.
\end{array} \right.
\end{equation}
From \eqref{basic3}, we have
\begin{equation} \label{D32D31-g}
D_{32} \beta_1 (a_3 - d_a \alpha_1) = D_{31} \beta_2 (a_3 - d_a \alpha_2).
\end{equation}
Equation \eqref{D32D31-g} implies $\beta_1 \mid D_{31} \beta_2 (a_3 - d_a \alpha_2)$ and $\beta_2 \mid D_{32} \beta_1 (a_3 - d_a \alpha_1)$. Since $\text{gcd}(\beta_1, \beta_2) = 1$, we have $\beta_1 \mid D_{31} (a_3 - d_a \alpha_2)$ and $\beta_2 \mid D_{32} (a_3 - d_a \alpha_1)$ by Euclid's lemma. Say
\[
\beta_1 k_1 = D_{31} (a_3 - d_a \alpha_2) \; \; \text{ and } \; \; \beta_2 k_2 = D_{32} (a_3 - d_a \alpha_1).
\]
Putting these equations into \eqref{D32D31-g}, we have $k_1 = k_2 = k > 0$,
\begin{equation} \label{k-eq-g}
\beta_1 k = D_{31} (a_3 - d_a \alpha_2) \; \; \text{ and } \; \; \beta_2 k = D_{32} (a_3 - d_a \alpha_1).
\end{equation}
By a similar argument with \eqref{basic3'} instead of \eqref{basic3}, we obtain
\begin{equation} \label{m-eq-g}
\alpha_1 m = D_{31} (b_3 - d_b \beta_2) \; \; \text{ and } \; \; \alpha_2 m = D_{32} (b_3 - d_b \beta_1)
\end{equation}
for some integer $m > 0$. By Lemma \ref{lem1}, we arrive at the inequality (similar to \eqref{km-ineq-1})
\begin{equation} \label{km-ineq-g}
k = \frac{d_b D_{31} (a_3 - d_a \alpha_2)}{d_b \beta_1} > \frac{d_b D_{31} (a_3 - d_a \alpha_2)}{a_1} > \frac{d_b D_{31}(b_3 - d_b \beta_2)}{d_a \alpha_1} = \frac{d_b}{d_a} m.
\end{equation}
Multiplying the second equation in \eqref{k-eq-g} with the first equation in \eqref{m-eq-g}, we get
\[
\beta_2 \alpha_1 k m = D_{32} D_{31} (a_3 - d_a \alpha_1) (b_3 - d_b \beta_2).
\]
On the other hand, from the proof of Lemma \ref{lem5}, we have
\[
d_a d_b \beta_2 \alpha_1 (D_{32} - D_{31} + D_{21}) = D_{21} (b_3 - d_b \beta_2)(a_3 - d_a \alpha_1).
\]
Consequently, we obtain
\begin{equation} \label{km-g}
k m = \frac{d_a d_b}{D_{21}}{D_{32} D_{31} (D_{32} +D_{21} - D_{31})} .
\end{equation}

Now, we apply Lemma \ref{lem3} and get
\begin{equation} \label{a3b3}
a_3 = \frac{d_a (D_{31} \alpha_2 - D_{32} \alpha_1)}{D_{21}} \; \; \text{ and } \; \; b_3 = \frac{d_b (D_{31} \beta_2 - D_{32} \beta_1)}{D_{21}}.
\end{equation}
Putting \eqref{a3b3} into the first equations in \eqref{k-eq-g} and \eqref{m-eq-g}, we get
\[
d_a D_{31} [(D_{31} - D_{21}) \alpha_2 - D_{32} \alpha_1] = D_{21} k \beta_1
\]
and
\[
d_b D_{31} [(D_{31} - D_{21}) \beta_2 - D_{32} \beta_1] = D_{21} m \alpha_1.
\]
Solving for $\alpha_2$ and $\beta_2$, we have
\begin{equation} \label{alpha2beta2}
\alpha_2 = \frac{D_{21} k \beta_1 + d_a D_{31} D_{32} \alpha_1}{d_a D_{31} (D_{31} - D_{21})} \; \; \text{ and } \; \; \beta_2 = \frac{D_{21} m \alpha_1 + d_b D_{31} D_{32} \beta_1}{d_b D_{31} (D_{31} - D_{21})}.
\end{equation}
Finally, substituting \eqref{alpha2beta2} into the first equation in \eqref{start1k-g}, we obtain the Pell-type equation:
\begin{equation} \label{pell-1-g}
d_b k \beta_1^2 - d_a m \alpha_1^2 = D_{31} (D_{31} - D_{21}).
\end{equation}

\section{The situation when $D_{21} = 2$}

Besides using $x^2 \equiv 0, 1$ (mod $3$), $x^2 \equiv 0, 1$ (mod $4$), and $x^2 \equiv 0, 1, 4$ (mod $5$), we need two more lemmas to rule out integer solutions to some Pell-type equations.

\begin{lem} \label{lem-Pell-check-1}
Let $K$, $M$, and $\tau$ be integers. Suppose, for some prime $p$ and some odd number $b > 0$, the equation 
\begin{equation} \label{temp-pell1}
    K x^2 - M y^2=\tau
\end{equation}
satisfies the conditions: (i) $p^{b}|\tau$, (ii) $p^{b+1}|K$, (iii) $p\nmid M$, (iv) $p^{b+1}\nmid\tau$, then no integer solution $(x,y)$ to \eqref{temp-pell1} exists.
\end{lem}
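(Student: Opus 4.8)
The plan is to argue by contradiction using the $p$-adic valuation $v_p$. Suppose $(x,y) \in \mathbb{Z}^2$ satisfies $K x^2 - M y^2 = \tau$. Rewrite this as $M y^2 = K x^2 - \tau$ and compare the exponent of $p$ on both sides. First I would record the hypotheses in valuation form: condition (i) together with (iv) says $v_p(\tau) = b$ exactly; condition (ii) says $v_p(K) \ge b+1$, hence $v_p(K x^2) = v_p(K) + 2 v_p(x) \ge b+1 > b = v_p(\tau)$.

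Next, the key elementary fact: if two integers have distinct $p$-adic valuations, their difference has valuation equal to the smaller one. Applying this to $K x^2$ and $\tau$, we get $v_p(K x^2 - \tau) = \min\{v_p(K x^2), v_p(\tau)\} = b$. Therefore $v_p(M y^2) = b$ as well.

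Finally I would extract the parity contradiction. By condition (iii), $p \nmid M$, so $v_p(M) = 0$ and $v_p(M y^2) = 2 v_p(y)$, which is even. But we just showed this equals $b$, which is odd by hypothesis — a contradiction. Hence no integer solution $(x,y)$ exists.

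There is essentially no hard step here; the only point requiring a word of care is the "difference of unequal valuations" lemma, which I would either invoke as standard or justify in one line (if $v_p(u) < v_p(w)$ then $u + w = u(1 + w/u)$ with $w/u$ divisible by $p$, so $v_p(u+w) = v_p(u)$). Everything else is bookkeeping with the four hypotheses. In the write-up I would be slightly careful to state the valuation facts for possibly-zero integers correctly, but since $\tau \ne 0$ (it is divisible by $p^b$ but not $p^{b+1}$, so nonzero) and a genuine solution forces $M y^2 = K x^2 - \tau$ to have valuation $b$ hence be nonzero, all the valuations in play are finite.
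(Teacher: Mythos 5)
Your proof is correct and is essentially the paper's argument recast in $p$-adic valuation language: both hinge on the fact that $p\nmid M$ forces the $p$-adic valuation of $My^2$ to be the even number $2v_p(y)$, which cannot match the exactly-$b$ (odd) power of $p$ forced by conditions (i), (ii), (iv). The paper phrases this as ``$p^{b+1}$ divides the left-hand side but not the right-hand side'' after writing $y=p^ry'$, while you phrase it as a parity clash of valuations; the content is the same.
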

\begin{proof}
Suppose for contrary that there exist integers $x,y$ satisfying $Kx^2-My^2=\tau$. Since $p^b \mid \tau$, and $p^{b+1} \mid K$, there exist integers $\tau'$ and $K'$ such that $\tau=\tau'p^b$ and $K=K'p^{b+1}$. So, equation \eqref{temp-pell1} becomes
\begin{equation} \label{temp-temp1}
    K' p^{b+1} x^2- M y^2=\tau' p^b.
\end{equation}
Since $p \nmid M$, we have $p^b \mid y^2$. Now, suppose $y = p^r y'$ for some $p \nmid y'$ and positive integer $r$. Then we have $p^b \mid p^{2r} y'^2$ which implies $2r \ge b$ or $r \ge b/2$. However, since $b$ is odd, we must have $r \ge (b + 1)/2$. Then $p^{b+1}$ divides the left-hand side of \eqref{temp-temp1} but not its right-hand side. This contradiction gives the lemma.
\end{proof}

\begin{lem} \label{lem-Pell-check-2}
Let $K$, $M$, and $\tau$ be integers. Suppose, for some prime $p$, the equation 
\begin{equation} \label{temp-pell2}
    Kx^2-My^2=\tau  
\end{equation}
satisfies the conditions: (i) $p \mid K$ and $p^2 \nmid K$ (i.e., $K = p K'$ with $p \nmid K'$), (ii) $p \mid \tau$ (i.e., $\tau = p \tau'$), (iii) $p\nmid M$, and (iv) $(K')^{-1}\tau'$ is a quadratic non-residue (mod $p$), then no integer solution $(x,y)$ to \eqref{temp-pell2} exists. Here $K'^{-1}$ stands for the multiplicative inverse of $K' \pmod{p}$.
\end{lem}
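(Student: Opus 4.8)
The plan is a single round of $p$-adic descent followed by a quadratic-residue obstruction, in the same spirit as Lemma~\ref{lem-Pell-check-1}. First I would suppose, for contradiction, that integers $x,y$ satisfy $Kx^2 - My^2 = \tau$, and write $K = pK'$ with $p\nmid K'$ by hypothesis (i), and $\tau = p\tau'$ by hypothesis (ii). Reducing the equation modulo $p$ annihilates the $Kx^2$ and $\tau$ terms, leaving $My^2 \equiv 0 \pmod p$; since $p \nmid M$ by (iii), Euclid's lemma forces $p \mid y$, so I may write $y = p y'$.

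Next I would substitute $K = pK'$, $\tau = p\tau'$, and $y = py'$ into the equation and cancel the common factor $p$ (legitimate, as all three terms are now multiples of $p$), obtaining
\[
K' x^2 - M p\, y'^2 = \tau'.
\]
Reducing this modulo $p$ and multiplying through by the inverse $K'^{-1} \pmod p$, which exists because $p \nmid K'$, yields $x^2 \equiv K'^{-1}\tau' \pmod p$. By hypothesis (iv) the right-hand side is a quadratic non-residue modulo $p$; in particular it is a nonzero class, so $p \nmid x$, and hence $x^2$ is a (nonzero) quadratic residue modulo $p$. This contradicts $x^2 \equiv K'^{-1}\tau'$, and the lemma follows.

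There is no genuine obstacle here; this is a routine descent, and I do not expect any step to be hard. The one subtlety worth flagging is the degenerate possibility $p \mid \tau'$, which would make $K'^{-1}\tau' \equiv 0 \pmod p$ — but $0$ is not a quadratic non-residue under the usual convention, so hypothesis (iv) silently excludes this case and the contradiction goes through verbatim. As with Lemma~\ref{lem-Pell-check-1}, this lemma is intended to certify that various instances of the generalized Pell equation~\eqref{pell-1-g} admit no integer solutions, which is exactly what is needed to eliminate cases in the analysis when $D_{21} = 2$ (and in later sections).
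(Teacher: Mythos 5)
Your proposal is correct and follows essentially the same route as the paper: deduce $p \mid y$ from the reduction mod $p$ together with $p \nmid M$, cancel a factor of $p$ to get $K'x^2 - Mp\,y'^2 = \tau'$, and then contradict hypothesis (iv) since $x^2 \equiv (K')^{-1}\tau' \pmod{p}$ would make a quadratic non-residue a square. Your side remark that $0$ is excluded by the non-residue convention is a harmless clarification, not a departure from the paper's argument.
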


\begin{proof}
Since $p \mid K$, $p \mid \tau$, and $p \nmid M$, we must have $p \mid y$. Say $y = p y'$ for some integer $y'$. Then equation \eqref{temp-pell2} can be rewritten as
\begin{equation*}
    p K'x^2-M p^2 y'^2=p \tau' \; \; \text{ or } \; \; K' x^2- M p y'^2=\tau'.
\end{equation*}
Reducing everything (mod $p$), we get 
\begin{equation*}
    K' x^2\equiv\tau' \pmod{p} \; \; \text{ or } \; \; x^2\equiv(K')^{-1} \tau' \pmod{p}
\end{equation*}
which has no integer solution $x$ as $(K')^{-1}\tau'$ is a quadratic non-residue (mod $p$).
\end{proof}

Suppose $D_{21} = 2$. We may restrict our attention to $3 \le D_{31} \le 5$ by Lemma \ref{lem4} and \ref{lem6}. Also, Lemma \ref{lem5} tells us that $D_{32} \ge D_{31} - 1$. From \eqref{disc}, \eqref{definitions}, \eqref{km-g} and \eqref{pell-1-g}, we have
\begin{equation} \label{2start1k}
\left\{ \begin{array}{l} \alpha_2 \beta_1 - \beta_2 \alpha_1 = \frac{2}{d_a d_b}, \\
d_b a_3 \beta_1 - d_a b_3 \alpha_1 = D_{31},
\end{array} \right.
\end{equation}
\begin{equation} \label{km-2}
k m = \frac{d_a d_b}{2}{D_{32} D_{31} (D_{32} - D_{31} + 2)},
\end{equation}
and
\begin{equation} \label{pell-1-2}
d_b k \beta_1^2 - d_a m \alpha_1^2 = D_{31} (D_{31} - 2).
\end{equation}
\noindent We have the following six cases as $3 \le D_{31} \le 5$, $D_{32} \ge D_{31} - 1$, and $D_{31} \neq D_{32}$ by Lemmas \ref{lem7} and \ref{lem8}. Note that $d_a d_b \mid 2$ since the left-hand side of \eqref{2start1k} is an integer.

\bigskip

Case 1: When $D_{31} = 3$ and $D_{32} = 2$. Then equation \eqref{km-2} gives $k m = 3 d_a d_b$.

\begin{center}
\begin{tabular}{|c|c|c|c|}
\hline
$(d_a, d_b)$ & $(k, m)$ as $k > \frac{d_a}{d_b} m$ & Pell equation \eqref{pell-1-2} & Solution  \\
\hline
$(1, 1)$ & $(3, 1)$ & $3 \beta_1^2 - \alpha_1^2 = 3$ & $\beta_1 = 2, \alpha_1 = 3$ \\
\hline
$(2,1)$ & $(6,1)$ & $6 \beta_1^2 - 2 \alpha_1^2 = 3$ & No, by parity \\
\hline & $(3,2)$ & $3 \beta_1^2 - 4 \alpha_1^2 = 3$ & $\beta_1 = 7$, $\alpha_1 = 6$ \\
\hline & $(2,3)$ & $2 \beta_1^2 - 6 \alpha_1^2 = 3$ & No, by parity \\
\hline
$(1,2)$ & $(6,1)$ & $12 \beta_1^2 - \alpha_1^2 = 3$ & $\beta_1 = 1$, $\alpha_1 = 3$ \\
\hline
\end{tabular}
\newline \small{Table 6: List out all Pell-type equations \eqref{pell-1-2} with solutions when $D_{31} = 3$, $D_{32} = 2$.}
\end{center}

Case 2: $D_{31} = 3$ and $D_{32} = 4$. Then equation \eqref{km-2} gives $k m = 18 d_a d_b$. We apply Lemma \ref{lem-Pell-check-1} with $p = 3$ for some of the checks below.

\begin{center}
\begin{tabular}{|c|c|c|c|}
\hline
$(d_a, d_b)$ & $(k, m)$ as $k > \frac{d_a}{d_b} m$ & Pell equation \eqref{pell-1-2} & Solution? \\
\hline $(1,1)$ & $(18,1)$ & $18\beta_1^2-\alpha_1^2=3$ & No, by Lemma \ref{lem-Pell-check-1} \\
\hline & $(9,2)$ & $9\beta_1^2-2\alpha_1^2=3$ & No, by Lemma \ref{lem-Pell-check-1} \\
\hline & $(6,3)$ & $6\beta_1^2-3\alpha_1^2=3$ & $\beta_1=1, \alpha_1=1$\\
\hline $(2,1)$ & $(36,1)$ & $36\beta_1^2-2\alpha_1^2=3$ & No, by parity\\
\hline & $(18,2)$ & $18\beta_1^2-4\alpha_1^2=3$ & No, by parity\\
\hline & $(12,3)$ & $12\beta_1^2-6\alpha_1^2=3$ & No, by parity \\
\hline & $(9,4)$ & $9\beta_1^2-8\alpha_1^2=3$ & No, by Lemma \ref{lem-Pell-check-1}\\
\hline & $(6,6)$ & $6\beta_1^2-12\alpha_1^2=3$ & No, by parity \\
\hline $(1,2)$ & $(36,1)$ & $72\beta_1^2-\alpha_1^2=3$ & No, by Lemma \ref{lem-Pell-check-1}\\
\hline & $(18,2)$ & $36\beta_1^2-2\alpha_1^2=3$ & No, by parity\\
\hline & $(12,3)$ & $24\beta_1^2-3\alpha_1^2=3$ & No, by $(\bmod \, 4)$\\
\hline & $(9,4)$ & $18\beta_1^2-4\alpha_1^2=3$ & No, by parity \\
\hline
\end{tabular}
\newline \small{Table 7: List out all Pell-type equations \eqref{pell-1-2} with solutions when $D_{31} = 3$, $D_{32} = 4$.}
\end{center}

Case 3: When $D_{31} = 3$ and $D_{32} = 5$. Then equation \eqref{km-2} gives $k m = 30 d_a d_b$. We apply Lemmas \ref{lem-Pell-check-1} and \ref{lem-Pell-check-2} with $p = 3$ for some of the checks below.

\begin{center}
\begin{tabular}{|c|c|c|c|}
\hline $(d_a, d_b)$ & $(k, m)$ as $k > \frac{d_a}{d_b} m$ & Pell equation \eqref{pell-1-2} & Solution? \\
\hline $(1,1)$ & $(30,1)$ & $30\beta_1^2-\alpha_1^2=3$ & No, by$(\bmod \, 5)$\\
\hline & $(15,2)$ & $15\beta_1^2-2\alpha_1^2=3$ & No, by Lemma \ref{lem-Pell-check-2}\\
\hline & $(10,3)$ & $10\beta_1^2-3\alpha_1^2=3$ & No, by Lemma \ref{lem-Pell-check-1}\\
\hline & $(6,5)$ & $6\beta_1^2-5\alpha_1^2=3$ & No, by $(\bmod \, 5)$\\
\hline $(2,1)$ & $(60,1)$ & $60\beta_1^2-2\alpha_1^2=3$ & No, by parity\\
\hline & $(30,2)$ & $30\beta_1^2-4\alpha_1^2=3$ & No, by parity\\
\hline & $(20,3)$ & $20\beta_1^2-6\alpha_1^2=3$ & No, by parity\\
\hline & $(15,4)$ & $15\beta_1^2-8\alpha_1^2=3$ & No, by Lemma \ref{lem-Pell-check-2}\\
\hline & $(12,5)$ & $12\beta_1^2-10\alpha_1^2=3$ & No, by parity\\
\hline & $(10,6)$ & $10\beta_1^2-12\alpha_1^2=3$ & No, by parity\\
\hline & $(6,10)$ & $6\beta_1^2-20\alpha_1^2=3$ & No, by parity\\
\hline $(1,2)$ & $(60,1)$ & $120\beta_1^2-\alpha_1^2=3$ & No, by $(\bmod \, 5)$\\
\hline & $(30,2)$ & $60\beta_1^2-2\alpha_1^2=3$ & No, by parity\\
\hline & $(20,3)$ & $40\beta_1^2-3\alpha_1^2=3$ & No, by $(\bmod \, 4)$\\
\hline & $(15,4)$ & $30\beta_1^2-4\alpha_1^2=3$ & No, by parity\\
\hline & $(12,5)$ & $24\beta_1^2-5\alpha_1^2=3$ & No, by $(\bmod \, 5)$\\
\hline
\end{tabular}
\newline \small{Table 8: List out all Pell-type equations \eqref{pell-1-2} with solutions when $D_{31} = 3$, $D_{32} = 5$.}
\end{center}

Case 4: $D_{31} = 4$ and $D_{32} = 3$. Then equation \eqref{km-2} gives $k m = 6 d_a d_b$. 

\begin{center}
\begin{tabular}{|c|c|c|c|}
\hline $(d_a, d_b)$ & $(k, m)$ as $k > \frac{d_a}{d_b} m$ & Pell equation \eqref{pell-1-2} & Solution? \\
\hline $(1,1)$ & $(6,1)$ & $6\beta_1^2-\alpha_1^2=8$ & $\beta_1=2$ and $\alpha_1=4$\\
\hline & $(3,2)$ & $3\beta_1^2-2\alpha_1^2=8$ & No, by $(\bmod \, 3)$\\
\hline $(2,1)$ & $(12,1)$ & $12\beta_1^2-2\alpha_1^2=8$ & No, by $(\bmod \, 3)$\\
\hline & $(6,2)$ & $6\beta_1^2-4\alpha_1^2=8$ & $\beta_1=2$ and $\alpha_1=2$\\
\hline & $(4,3)$ & $4\beta_1^2-6\alpha_1^2=8$ & No, by $(\bmod \, 3)$\\
\hline & $(3,4)$ & $3\beta_1^2-8\alpha_1^2=8$ & No, by $(\bmod \, 3)$\\
\hline $(1,2)$ & $(12,1)$ & $24\beta_1^2-\alpha_1^2=8$ & $\beta_1=1$ and $\alpha_1=4$\\
\hline & $(6,2)$ & $12\beta_1^2-2\alpha_1^2=8$ & No, by $(\bmod \, 3)$\\
\hline
\end{tabular}
\newline \small{Table 9: List out all Pell-type equations \eqref{pell-1-2} with solutions when $D_{31} = 4$, $D_{32} = 3$.}
\end{center}

Case 5: $D_{31} = 4$ and $D_{32} = 5$. Then equation \eqref{km-2} gives $k m = 30 d_a d_b$.

\begin{center}
\begin{tabular}{|c|c|c|c|}
\hline $(d_a, d_b)$ & $(k, m)$ as $k > \frac{d_a}{d_b} m$ & Pell equation \eqref{pell-1-2} & Solution? \\
\hline $(1,1)$ & $(30,1)$ & $30\beta_1^2-\alpha_1^2=8$ & No, by $(\bmod \, 5)$\\
\hline & $(15,2)$ & $15\beta_1^2-2\alpha_1^2=8$ & No, by $(\bmod \, 3)$\\
\hline & $(10,3)$ & $10\beta_1^2-3\alpha_1^2-8$ & No, by $(\bmod \, 3)$\\
\hline & $(6,5)$ & $6\beta_1^2-5\alpha_1^2=8$ & No, by $(\bmod \, 3)$\\
\hline $(2,1)$ & $(60,1)$ & $60\beta_1^2-2\alpha_1^2=8$ & No by,  $(\bmod \, 3)$\\
\hline & $(30,2)$ & $30\beta_1^2-4\alpha_1^2=8$ & No, by $(\bmod \, 5)$\\
\hline & $(20,3)$ & $20\beta_1^2-6\alpha_1^2=8$ & No, by $(\bmod \, 5)$\\
\hline & $(15,4)$ & $15\beta_1^2-8\alpha_!^2=8$ & No, by $(\bmod \, 3)$\\
\hline & $(12,5)$ & $12\beta_1^2-10\alpha_1^2=8$ & $\beta_1=2$ and $\alpha_1=2$\\
\hline & $(10,6)$ & $10\beta_1^2-12\alpha_1^2=8$ & No, by $(\bmod \, 3)$\\
\hline & $(6,10)$ & $6\beta_1^2-20\alpha_1^2=8$ & No, by $(\bmod \, 3)$\\
\hline $(1,2)$ & $(60,1)$ & $120\beta_1^2-\alpha_1^2=8$ & No, by $(\bmod \, 5)$\\
\hline & $(30,2)$ & $60\beta_1^2-2\alpha_1^2=8$ & No, by $(\bmod \, 3)$\\
\hline & $(20,3)$ & $40\beta_1^2-3\alpha_1^2=8$ & No, by $(\bmod \, 3)$\\
\hline & $(15,4)$ & $30\beta_1^2-4\alpha_1^2=8$ & No, by $(\bmod \, 5)$\\
\hline & $(12,5)$ & $24\beta_1^2-5\alpha_1^2=8$ & No, by $(\bmod \, 5)$\\
\hline
\end{tabular}
\newline \small{Table 10: List out all Pell-type equations \eqref{pell-1-2} with solutions when $D_{31} = 4$, $D_{32} = 5$.}
\end{center}



\section{The situation when $D_{21} = 3$}
Suppose $D_{21} = 3$. We can restrict our attention to $4 \le D_{31} \le 5$ by Lemmas \ref{lem4} and \ref{lem6}. Also, Lemmas \ref{lem5} and \ref{lem8} tells us that $D_{32}\geq D_{31} - 2$ and $D_{31} \neq D_{32}$. Then, equations \eqref{km-g} and \eqref{pell-1-g} give
\[
km = \frac{d_a d_b}{3}{D_{32} D_{31} (D_{32} + 3 - D_{31})},
\]
and
\begin{equation} \label{pell-3}
d_b k \beta_1^2 - d_a m \alpha_1^2 = D_{31} (D_{31} - 3).
\end{equation}
Note that $d_a d_b \mid 3$ by \eqref{start1k-g} and we have the following five cases by Lemma \ref{lem7}.

\bigskip

Case 1: $D_{31}=4$ and $D_{32}=2$. We have $k m = \frac{8 d_a d_b}{3}$ and $d_a = d_b = 1$ is impossible.

\begin{center}
\begin{tabular}{|c|c|c|c|}
\hline $(d_a, d_b)$ & $(k, m)$ as $k > \frac{d_a}{d_b} m$ & Pell equation \eqref{pell-3} & Solution? \\
\hline $(3,1)$ & $(8,1)$ & $8\beta_1^2-3\alpha_1^2=4$ & No, by $(\bmod \, 3)$\\
\hline & $(4,2)$ & $4\beta_1^2-6\alpha_1^2=4$ & $\beta_1=5$ and $\alpha_1=4$\\
\hline & $(2,4)$ & $2\beta_1^2-12\alpha_1^2=4$ & No, by $(\bmod \, 3)$\\
\hline $(1,3)$ & $(8,1)$ & $24\beta_1^2-\alpha_1^2=4$ & No, by $(\bmod \, 3)$\\
\hline
\end{tabular}
\newline \small{Table 11: List out all Pell-type equations \eqref{pell-3} with solutions when $D_{31} = 4$, $D_{32} = 2$.}
\end{center}

Case 2: $D_{31}=4$ and $D_{32}=3$. We have $km=8 d_a d_b$.

\begin{center}
\begin{tabular}{|c|c|c|c|}
\hline $(d_a, d_b)$ & $(k, m)$ as $k > \frac{d_a}{d_b} m$ & Pell equation \eqref{pell-3} & Solution? \\
\hline $(3,1)$ & $(24,1)$ & $24\beta_1^2-3\alpha_1^2=4$ & No, by $(\bmod \, 3)$\\
\hline &$(12,2)$ & $12\beta_1^2-6\alpha_1^2=4$ & No, by $(\bmod \, 3)$\\
\hline & $(8,3)$ & $8\beta_1^2-9\alpha_1^2=4$ & No, by $(\bmod \, 3)$\\
\hline & $(6,4)$ & $6\beta_1^2-12\alpha_1^2=4$ & No, by $(\bmod \, 3)$\\
\hline & $(4,6)$ & $4\beta_1^2-18\alpha_1^2=4$ & $\beta_1=17$ and $\alpha_1=8$\\
\hline & $(3,8)$ & $3\beta_1^2-24\alpha_1^2=4$ & No, by $(\bmod \, 3)$\\
\hline $(1,3)$ & $(24,1)$ & $72\beta_1^2-\alpha_1^2=4$ & No, by $(\bmod \, 3)$\\
\hline & $(12,2)$ & $36\beta_1^2-2\alpha_1^2=4$ & $\beta_1=1$ and $\alpha_1=4$\\
\hline $(1,1)$ & $(8,1)$ & $8\beta_1^2-\alpha_1^2=4$ & $\beta_1=1$ and $\alpha_1=2$\\
\hline & $(4,2)$ & $4\beta_1^2-2\alpha_1^2=4$ & $\beta_1=3$ and $\alpha_1=4$\\
\hline
\end{tabular}
\newline \small{Table 12: List out all Pell-type equations \eqref{pell-3} with solutions when $D_{31} = 4$, $D_{32} = 3$.}
\end{center}

Case 3: $D_{31} = 4$ and $D_{32} = 5$. We have $k m = \frac{80}{3} d_a d_b$ and $d_a = d_b = 1$ is impossible.

\begin{center}
\begin{tabular}{|c|c|c|c|}
\hline $(d_a, d_b)$ & $(k, m)$ as $k > \frac{d_a}{d_b} m$ & Pell equation \eqref{pell-3} & Solution? \\
\hline $(3,1)$ & $(80,1)$ & $80\beta_1^2-3\alpha_1^2=4$ & No, by $(\bmod \, 3)$\\
\hline & $(40,2)$ & $40\beta_1^2-6\alpha_1^2=4$ & No, by $(\bmod \, 8)$\\
\hline & $(20,4)$ & $20\beta_1^2-12\alpha_1^2=4$ & No, $(\bmod \, 3)$\\
\hline & $(16,5)$ & $16\beta_1^2-15\alpha_1^2=4$ & $\beta_1=2$ and $\alpha_1=2$\\
\hline & $(10,8)$ & $10\beta_1^2-24\alpha_1^2=4$ & No, by $(\bmod \, 4)$\\
\hline $(1,3)$ & $(80,1)$ & $240\beta_1^2-\alpha_1^2=4$ & No, by $(\bmod \, 3)$\\
\hline & $(40,2)$ & $120\beta_1^2-2\alpha_1^2=4$ & No, by $(\bmod \, 4)$\\
\hline & $(20,4)$ & $60\beta_1^2-4\alpha_1^2=4$ & No, by $(\bmod \, 3)$\\
\hline & $(16,5)$ & $48\beta_1^2-5\alpha_1^2=4$ & No, by $(\bmod \, 5)$\\
\hline
\end{tabular}
\newline \small{Table 13: List out all Pell-type equations \eqref{pell-3} with solutions when $D_{31} = 4$, $D_{32} = 5$.}
\end{center}

Case 4: $D_{31} = 5$ and $D_{32} = 3$. We have $k m = 5 d_a d_b$.

\begin{center}
\begin{tabular}{|c|c|c|c|}
\hline $(d_a, d_b)$ & $(k, m)$ as $k > \frac{d_a}{d_b} m$ & Pell equation \eqref{pell-3} & Solution? \\
\hline $(1,1)$ & $(5,1)$ & $5\beta_1^2-\alpha_1^2=10$ & No, by $(\bmod \, 4)$\\
\hline $(3,1)$ & $(15,1)$ & $15\beta_1^2-3\alpha_1^2=10$ & No, by $(\bmod \, 3)$\\
\hline & $(5,3)$ & $5\beta_1^2-9\alpha_1^2=10$ & No, by $(\bmod \, 3)$\\
\hline & $(3,5)$ & $3\beta_1^2-15\alpha_1^2=10$ & No, by $(\bmod \, 3)$\\
\hline $(1,3)$ & $(15,1)$ & $45\beta_1^2-\alpha_1^2=10$ & No, by $(\bmod \, 3)$\\
\hline
\end{tabular}
\newline \small{Table 14: List out all Pell-type equations \eqref{pell-3} with solutions when $D_{31} = 5$, $D_{32} = 3$.}
\end{center}



\section{The situation when $D_{21} = 4$}

Suppose $D_{21} = 4$. We may restrict our attention to $D_{31} = 5$ by Lemmas \ref{lem4} and \ref{lem6}. Also, Lemmas \ref{lem5} and \ref{lem8} tells us that $D_{32}\geq D_{31} - 3$. Then, equations \eqref{km-g} and \eqref{pell-1-g} give
\begin{equation} \label{4start1k}
\left\{ \begin{array}{l} \alpha_2 \beta_1 - \beta_2 \alpha_1 = \frac{4}{d_a d_b}, \\
d_b a_3 \beta_1 - d_a b_3 \alpha_1 = D_{31},
\end{array} \right.
\end{equation}
\begin{equation*} \label{km-4}
k m = \frac{d_a d_b}{4}{D_{32} D_{31} (D_{32} - D_{31} + 4)},
\end{equation*}
and
\begin{equation} \label{pell-4}
d_b k \beta_1^2 - d_a m \alpha_1^2 = D_{31} (D_{31} - 4).
\end{equation}
\noindent We have the following four cases by Lemma \ref{lem7}. Note that $d_a d_b \mid 4$ by \eqref{4start1k}.

\bigskip

Case 1: $D_{31} = 5$ and $D_{32} = 2$. We have $k m = \frac{5}{2} d_a d_b$ and $d_a = d_b = 1$ is impossible. We apply Lemma \ref{lem-Pell-check-1} with $p = 5$ for one of the checks below.

\begin{center}
\begin{tabular}{|c|c|c|c|}
\hline $(d_a, d_b)$ & $(k, m)$ as $k > \frac{d_a}{d_b} m$ & Pell equation \eqref{pell-4} & Solution? \\
\hline $(2,2)$ & $(k, m)$ & $2 k \beta_1^2- 2 m \alpha_1^2=5$ & No, by parity\\
\hline $(2,1)$ & $(5,1)$ & $5\beta_1^2-2\alpha_1^2=5$ & $\beta_1=19, \; \; \alpha_1=30$\\
\hline $(1,2)$ & $(5,1)$ & $10\beta_1^2-\alpha_1^2=5$ & No, by Lemma \ref{lem-Pell-check-2} \\
\hline & $(5,2)$ & $10\beta_1^2-4\alpha_1^2=5$ & No, by parity\\
\hline $(4,1)$ & $(10,1)$ & $10\beta_1^2-4\alpha_1^2=5$ & No, by parity\\
\hline & $(5,2)$ & $5\beta_1^2-8\alpha_1^2=5$ & $\beta_1=19, \; \; \alpha_1=15$\\
\hline & $(2,5)$ & $2\beta_1^2-20\alpha_1^2=5$ & No, by parity\\
\hline $(1,4)$ & $(10,1)$ & $40\beta_1^2-\alpha_1^2=5$ & No, by $(\bmod \, 4)$\\
\hline
\end{tabular}
\newline \small{Table 15: List out all Pell-type equations \eqref{pell-4} with solutions when $D_{31} = 5$, $D_{32} = 2$.}
\end{center}

Case 2: $D_{31} = 5$ and $D_{32} = 3$. We have $k m = \frac{15}{2} d_a d_b$ and $d_a = d_b = 1$ is impossible. We apply Lemma \ref{lem-Pell-check-1} with $p = 5$ for one of the checks below.

\begin{center}
\begin{tabular}{|c|c|c|c|}
\hline $(d_a, d_b)$ & $(k, m)$ as $k > \frac{d_a}{d_b} m$ & Pell equation \eqref{pell-4} & Solution? \\
\hline $(2,2)$ & $(k,m)$ & $2 k \beta_1^2-2 m \alpha_1^2=5$ & No, by parity \\
\hline $(1,2)$ & $(15,1)$ & $30\beta_1^2-\alpha_1^2=5$ & $\beta_1=1, \; \; \alpha_1=5$\\
\hline $(2,1)$ & $(15,1)$ & $15\beta_1^2-2\alpha_1^2=5$ & No, by $(\bmod \, 3)$\\
\hline & $(5,3)$ & $5\beta_1^2-6\alpha_1^2=5$ & $\beta_1=11, \; \; \alpha_1=10$\\
\hline & $(3,5)$ & $3\beta_1^2-10\alpha_1^2=5$ & No, by Lemma \ref{lem-Pell-check-1} \\
\hline $(4,1)$ & $(30,1)$ & $30\beta_1^2-4\alpha_1^2=5$ & No, by parity\\
\hline & $(15,2)$ & $15\beta_1^2-8\alpha_1^2=5$ & No, by $(\bmod \, 3)$\\
\hline & $(10,3)$ & $10\beta_1^2-12\alpha_1^2=5$ & No, by parity\\
\hline & $(6,5)$ & $6\beta_1^2-20\alpha_1^2=5$ & No, by parity\\
\hline & $(3,10)$ & $3\beta_1^2-40\alpha_1^2=5$ & No, by parity\\
\hline & $(5,6)$ & $5\beta_1^2-24\alpha_1^2=5$ & $\beta_1=11, \; \; \alpha_1=5$\\
\hline $(1,4)$ & $(30,1)$ & $120\beta_1^2-\alpha_1^2=5$ & No, by $(\bmod \, 4)$\\
\hline & $(15,2)$ & $60\beta_1^2-2\alpha_1^2=5$ & No, by parity\\
\hline
\end{tabular}
\newline \small{Table 16: List out all Pell-type equations \eqref{pell-4} with solutions when $D_{31} = 5$, $D_{32} = 3$.}
\end{center}

\section{A formula for the ratio $R_4$}

Our ultimate goal is to estimate the ratio $R_4 = A / a_3^3$. To achieve this, we express everything in terms of $\alpha_1$ (as appeared in \eqref{pell-1-g}).
\begin{lem} \label{lem14}
For $1 \le D_{21}, D_{31}, D_{32} \le 5$, we have
\begin{equation*}
    R_4 = \frac{m (D_{31}-D_{21}) \bigl( D_{21}\sqrt{\frac{k m}{d_a d_b}}+D_{32}{D_{31} \bigr)\bigl[ D_{21} + D_{31}\sqrt{\frac{d_a d_b}{k m}}(D_{32}-D_{31}+D_{21})}\bigr]}
    {D_{21}D_{31}^2 d_a\bigl( \sqrt{\frac{k m}{d_a d_b}}+D_{32} \bigr)^3} + O\Bigl(\frac{1}{\alpha_1^2}\Bigr).
\end{equation*}
\end{lem}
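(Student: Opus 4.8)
The plan is to start from the identity $A = a_3 a_1(b_3-b_1)/D_{31}$ in \eqref{basic3'}, which gives
\[
R_4 \;=\; \frac{A}{a_3^3} \;=\; \frac{a_1(b_3-b_1)}{D_{31}\,a_3^2},
\]
then to express $a_1$, $a_3$, and $b_3-b_1$ as explicit functions of the two quantities $\alpha_1,\beta_1$ occurring in the Pell equation \eqref{pell-1-g}, and finally to use \eqref{pell-1-g} to eliminate $\beta_1$ in favour of $\alpha_1$ while controlling the error.

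First, $a_1 = d_a\alpha_1$ by \eqref{definitions}. Substituting the formulas \eqref{alpha2beta2} for $\alpha_2,\beta_2$ into \eqref{a3b3} and simplifying, one obtains the clean identities
\[
a_3 \;=\; \frac{k\beta_1 + d_a D_{32}\alpha_1}{D_{31}-D_{21}}, \qquad b_3 \;=\; \frac{m\alpha_1 + d_b D_{32}\beta_1}{D_{31}-D_{21}},
\]
and hence $b_3-b_1 = \bigl(m\alpha_1 + d_b(D_{32}-D_{31}+D_{21})\beta_1\bigr)/(D_{31}-D_{21})$; all these denominators are $\ge 1$ by Lemma \ref{lem4}. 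Putting these into the formula for $R_4$ makes it an exact ratio of two homogeneous quadratics in $(\alpha_1,\beta_1)$.

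Next, \eqref{pell-1-g} gives $\beta_1 = (d_b k)^{-1/2}\sqrt{d_a m\,\alpha_1^2 + D_{31}(D_{31}-D_{21})}$, and since $d_b k>0$ and the additive constant is fixed, $\beta_1 = \sqrt{d_a m/(d_b k)}\,\alpha_1 + O(1/\alpha_1)$. Substituting, the numerator and denominator of the ratio each become a degree-two polynomial in $\alpha_1$ whose leading coefficient is an explicit positive constant in $D_{21},D_{31},D_{32},d_a,d_b,k,m$, plus an $O(1)$ remainder; since the denominator's leading coefficient $D_{31}\bigl(k\sqrt{d_a m/(d_b k)}+d_a D_{32}\bigr)^2/(D_{31}-D_{21})^2$ is bounded away from $0$ (and uniformly so over the finitely many admissible triples of skews), dividing through yields $R_4 = (\text{ratio of leading coefficients}) + O(1/\alpha_1^2)$. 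Using $k\sqrt{d_a m/(d_b k)} = d_a\sqrt{km/(d_a d_b)}$ and $d_b\sqrt{d_a m/(d_b k)} = m\sqrt{d_a d_b/(km)}$, this ratio simplifies to the compact form
\[
\frac{m(D_{31}-D_{21})\bigl(\sqrt{km/(d_a d_b)} + D_{32}-D_{31}+D_{21}\bigr)}{D_{31}\,d_a\,\sqrt{km/(d_a d_b)}\;\bigl(\sqrt{km/(d_a d_b)}+D_{32}\bigr)^2}.
\]

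It remains to check that this compact expression coincides with the one in the statement. Writing $s=\sqrt{km/(d_a d_b)}$ and $E = D_{32}-D_{31}+D_{21}$, equation \eqref{km-g} reads precisely $D_{21}s^2 = D_{32}D_{31}E$; hence $D_{21}s + D_{32}D_{31} = D_{21}s(s+E)/E$ and $D_{21} + D_{31}E/s = D_{21}(s+D_{32})/D_{32}$, and feeding these into the stated numerator, cancelling, and applying $D_{21}s^2 = D_{32}D_{31}E$ once more collapses the stated fraction to $m(D_{31}-D_{21})(s+E)/\bigl(D_{31}d_a s(s+D_{32})^2\bigr)$, which is the compact form. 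I expect the only genuine difficulty to be bookkeeping: propagating the $O(1/\alpha_1)$ error cleanly through the substitution, and verifying that the rearranged fraction is literally --- not just proportionally --- the one in the statement; reducing both sides to the compact form above is the safest way to confirm this.
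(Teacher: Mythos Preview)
Your proof is correct and follows the same overall strategy as the paper: express $a_3$ and $A$ explicitly in $\alpha_1,\beta_1$ via \eqref{alpha2beta2} and \eqref{a3b3}, use the Pell relation \eqref{pell-1-g} to replace $\beta_1/\alpha_1$ by $\sqrt{d_a m/(d_b k)}+O(1/\alpha_1^2)$, and read off the leading coefficient.

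The one substantive difference is the choice of representation for $A$. The paper starts from $A=a_1a_2(b_2-b_1)/D_{21}$, substitutes \eqref{alpha2beta2} for $\alpha_2$ and $\beta_2-\beta_1$, and lands \emph{directly} on the stated expression. You instead start from $A=a_1a_3(b_3-b_1)/D_{31}$; one factor of $a_3$ cancels in $A/a_3^3$, leaving a ratio of homogeneous quadratics rather than cubics, and you obtain the cleaner closed form
\[
R_4=\frac{m(D_{31}-D_{21})\bigl(s+D_{32}-D_{31}+D_{21}\bigr)}{D_{31}\,d_a\,s\,(s+D_{32})^2}+O\!\Bigl(\frac{1}{\alpha_1^2}\Bigr),\qquad s=\sqrt{\tfrac{km}{d_ad_b}},
\]
and then must invoke \eqref{km-g} (equivalently $D_{21}s^2=D_{31}D_{32}(D_{32}-D_{31}+D_{21})$) to reconcile it with the statement. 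This extra step is the price you pay for the simpler algebra, but it also makes explicit that the stated formula carries a redundancy: $s$ is already determined by the $D_{ij}$, so your compact form is an equivalent and arguably tidier way to record $R_4$. The paper's bookkeeping for the $O(1/\alpha_1^2)$ error is slightly more explicit (it records up front that $d_ad_b\le 5$ and $km\le 500$, so all constants are uniformly bounded over the finitely many admissible parameter sets); your remark about uniformity over ``the finitely many admissible triples of skews'' covers the same ground.
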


\begin{proof}
Since $d_a d_b = \gcd(a_1, a_2) \gcd(b_1, b_2) \mid a_2 b_1 - b_2 a_1 = D_{21} \le 5$, we have $1 \le d_a d_b \le 5$. Combining this with \eqref{km-g} and Lemmas \ref{lem4} and \ref{lem5}, we have $1 \le k m \le 5 \cdot 5 \cdot 5 \cdot 4 = 500$. From the Pell-type equation \eqref{pell-1-g}, we obtain
\[
\Bigl( \beta_1 - \sqrt{\frac{d_a m}{d_b k}} \alpha_1 \Bigr) \Bigl( \beta_1 + \sqrt{\frac{d_a m}{d_b k}} \alpha_1 \Bigr) = \beta_1^2 - \frac{d_a m}{d_b k} \alpha_1^2 = O(1)
\]
which implies
\begin{equation} \label{lem10}
\frac{\beta_1}{\alpha_1} - \sqrt{\frac{d_a m}{d_b k}} = O \Bigl( \frac{1}{\sqrt{(d_a m) / (d_b k)} \alpha_1^2} \Bigr) = O \Bigl( \frac{1}{\alpha_1^2} \Bigr)
\end{equation}
as $\frac{d_b k}{d_a m} \le \frac{5 \cdot 500}{1 \cdot 1}$. 

\bigskip

Next, we find formulas for $a_3$, $A$, and $R_4$. Substituting \eqref{alpha2beta2} into \eqref{a3b3}, we get
\begin{equation} \label{a3-new}
a_3 = \frac{k \beta_1 + d_a D_{32} \alpha_1}{D_{31} - D_{21}}.
\end{equation}
By \eqref{basic3'}, \eqref{definitions} and \eqref{a3-new}, we have $A = \frac{a_2 a_1 (b_2 - b_1)}{D_{21}} = \frac{d_a^2 d_b \alpha_2 \alpha_1 (\beta_2 - \beta_1)}{D_{21}}$. Hence, we obtain
\begin{equation} \label{R4-temp1}
R_4 = \frac{A}{a_3^3} = \frac{(D_{31} - D_{21})^3 d_a^2 d_b \alpha_2 \alpha_1 (\beta_2 - \beta_1)}{D_{21} (k \beta_1 + d_a D_{32} \alpha_1)^3}.
\end{equation}
From \eqref{alpha2beta2}, we have $\beta_2 - \beta_1 = \frac{D_{21} m \alpha_1 + d_b D_{31} (D_{32} + D_{21} - D_{31}) \beta_1}{d_b D_{31} (D_{31} - D_{21})}$. Substituting this and \eqref{alpha2beta2} into \eqref{R4-temp1}, we obtain
\begin{align*}
R_4 =& \frac{(D_{31} - D_{21}) d_a \alpha_1 (D_{21} k \beta_1 + d_a D_{31} D_{32} \alpha_1) (D_{21} m \alpha_1 + d_b D_{31} (D_{32} + D_{21} - D_{31}) \beta_1)}{D_{21} D_{31}^2 (k \beta_1 + d_a D_{32} \alpha_1)^3} \\
=& \frac{(D_{31} - D_{21}) d_a \bigl( D_{21} k \frac{\beta_1}{\alpha_1} + d_a D_{31} D_{32} \bigr) \bigl(D_{21} m + d_b D_{31} (D_{32} + D_{21} - D_{31}) \frac{\beta_1}{\alpha_1} \bigr)}{D_{21} D_{31}^2 (k \frac{\beta_1}{\alpha_1} + d_a D_{32})^3}.
\end{align*}
Finally, we apply \eqref{lem10} to the above equation and get
\begin{align*}
R_4 =& \frac{(D_{31} - D_{21}) d_a^2 \bigl( D_{21} \sqrt{\frac{k m}{d_a d_b}} + D_{31} D_{32} + O( \frac{1}{\alpha_1^2} ) \bigr)}{D_{21} D_{31}^2 d_a^3 \bigl(\sqrt{\frac{k m}{d_a d_b}} + D_{32} + O( \frac{1}{\alpha_1^2} ) \bigr)^3} \\
&\cdot  \Bigl(D_{21} m + D_{31} (D_{32} + D_{21} - D_{31}) \sqrt{\frac{d_a d_b m}{k}} + O \Bigl( \frac{1}{\alpha_1^2} \Bigr) \Bigr) \\
=& \frac{m (D_{31} - D_{21}) \bigl( D_{21} \sqrt{\frac{k m}{d_a d_b}} + D_{31} D_{32} \bigr) \bigl( 1 + O( \frac{1}{\alpha_1^2} ) \bigr)}{D_{21} D_{31}^2 d_a \bigl(\sqrt{\frac{k m}{d_a d_b}} + D_{32} \bigr)^3 \bigl( 1 + O( \frac{1}{\alpha_1^2} ) \bigr)^3} \\
&\cdot  \Bigl[ D_{21} + D_{31} (D_{32} + D_{21} - D_{31}) \sqrt{\frac{d_a d_b}{k m}} \Bigr] \Bigl(1 + O \Bigl( \frac{1}{\alpha_1^2} \Bigr)  \Bigr).
\end{align*}
This gives the lemma.
\end{proof}


\section{Proof of Theorem \ref{thm1}}

\begin{proof}
If one of $D_{21}, D_{31}, D_{32}$ is greater than $5$, then Lemma \ref{lem6} with $C = a_3$ implies $R_4 < 0.042$ which is smaller than the upper bound in the theorem when $n$ is sufficiently large. By Lemma \ref{lem7}, we have $R_4 \le 0.04$ when $D_{31} = 5$ and $D_{32} = 4$. If $D_{31} = D_{32}$, then Lemma \ref{lem8} implies $R_4 \le 0.25 / a_3$ which is much smaller than $0.042$ when $n$ (and, hence, $a_3$) is large. So, we can narrow our attention to $1 \le D_{21}, D_{31}, D_{32} \le 5$ with $D_{31} \neq D_{32}$ and omitting the case $D_{31} = 5$ and $D_{32} = 4$. Based on the previous sections, we see that the four close factorizations of $n$ imply an integer solution to a certain Pell-type equation \eqref{pell-1-g}. However, as shown from the tables in the previous sections, many selections of the parameters $D_{21}$, $D_{31}$, $D_{32}$, $d_a$, $d_b$, $k$, and $m$ yield no integer solution. Thus, we can focus on those Pell-type equations with solutions and we summarize them into the table below, using Lemma \ref{lem14} to compute $R_4$ (ignoring error term). 

\bigskip

One can see that the largest ratio (in red and the only one bigger than $0.042$) comes from the situation when $D_{21} = 1$, $D_{31} = 3$, $D_{32} = 4$, $d_a = d_b = 1$, and $(k, m) = (6, 4)$. Putting these parameters into Lemma \ref{lem14} and simplifying, we arrive at
\[
R_4 = \frac{6 + \sqrt{6}}{9 (2 + \sqrt{6})^2} + O\Bigl(\frac{1}{\alpha_1^2}\Bigr).
\]
From \eqref{a3-new} and Lemma \ref{lem10}, we know that $a_3 = O(\alpha_1)$ as $d_a \le 5$ and $m \le 500$. Hence, it follows that $\sqrt{n} \le A \le a_2 a_1 (b_2 - b_1) < a_3^3 = O( \alpha_1^3 )$ by \eqref{basic3'} and Lemma \ref{lem1}. This implies $\frac{1}{\alpha_1^3} = O( \frac{1}{\sqrt{n}} )$ or $\frac{1}{\alpha_1^2} = O( \frac{1}{n^{1/3}} )$, and, hence, Theorem \ref{thm1}.
\begin{center}
\begin{tabular}{|c|c|c|c|c|c|c|c|}
\hline
\multicolumn{3}{|c|}{$D_{ij}$ Skews} & \multicolumn{2}{c|}{Divisors} & \multicolumn{2}{c|}{Parameters} & Ratio $R_4 = A / a_3^3$\\
\hline
$D_{21}$ & $D_{31}$ & $D_{32}$ & $d_a$ & $d_b$ & $k$ & $m$ & \\ 
\hline
\hline
1 & 2 & 4 & 1 & 1 & 6 & 4 & 0.04072067323 \\
1 & 2 & 5 & 1 & 1 & 20 & 2 & 0.01272913946 \\
1 & 3 & 4 & 1 & 1 & 6 & 4 & {\color{red}0.04742065558 } \\
\hline
\multirow{2}{*}{1} & \multirow{2}{*}{4} & \multirow{2}{*}{5} & \multirow{2}{*}{1} & \multirow{2}{*}{1} & 20 & 2 & 0.01539501058 \\
 & & & & & 8 & 5 & 0.03848752646 \\
\hline
\multirow{3}{*}{2} & \multirow{3}{*}{3} & \multirow{3}{*}{2} & 1 & 1 & 3 & 1 & 0.03774955135 \\
 & & & 2 & 1 & 3 & 2 & 0.03774955135 \\
 & & & 1 & 2 & 6 & 1 & 0.03774955135 \\
\hline
2 & 3 & 4 & 1 & 1 & 6 & 3 & 0.02512626585 \\
\hline
2 & 4 & 5 & 2 & 1 & 12 & 5 & 0.01762424561 \\
\hline
\multirow{2}{*}{2} & \multirow{2}{*}{5} & \multirow{2}{*}{4} & 1 & 1 & 10 & 1 & 0.01539501058 \\
 & & & 1 & 2 & 20 & 1 & 0.01539501058 \\
\hline
3 & 4 & 2 & 3 & 1 & 4 & 2 & 0.02036033661 \\
\hline
\multirow{4}{*}{3} & \multirow{4}{*}{4} & \multirow{4}{*}{3} & 3 & 1 & 4 & 6 & 0.02512626585 \\
 & & & 1 & 3 & 12 & 2 & 0.02512626585 \\
 & & & 1 & 1 & 8 & 1 & 0.01256313292 \\
 & & & 1 & 1 & 4 & 2 & 0.02512626585 \\
\hline
3 & 4 & 5 & 3 & 1 & 16 & 5 & 0.00715749421 \\
\hline
\multirow{2}{*}{4} & \multirow{2}{*}{5} & \multirow{2}{*}{2} & 2 & 1 & 5 & 1 & 0.01272913946 \\
 & & & 4 & 1 & 5 & 2 & 0.01272913946 \\
\hline
\multirow{2}{*}{4} & \multirow{2}{*}{5} & \multirow{2}{*}{3} & 2 & 1 & 5 & 3 & 0.01576260533 \\
 & & & 1 & 2 & 15 & 1 & 0.01050840355 \\
\hline
\end{tabular}
\newline \small{Table 17: List out all cases from Tables 1 - 16 with solutions and compute $R_4$.}
\end{center}

\bigskip

The generalized Pell equation \eqref{pell-1-g} corresponding to the largest ratio is
\[
6 \beta_1^2 - 4 \alpha_1^2 = 6 \; \; \Leftrightarrow \; \; 3 \beta_1^2 - 2 \alpha_1^2 = 3 \; \; \Leftrightarrow \; \; \beta_1^2 - 6 \Bigl( \frac{\alpha_1}{3} \Bigr)^2 = 1
\]
Note: One can see that $3 \mid \alpha_1$ and, hence, $\frac{\alpha_1}{3}$ is an integer. By the theory of Pell equation, all integer solutions are generated by the fundamental solution: For integer $i \ge 1$, we have
\[
\beta_1 + \frac{\alpha_1}{3} \sqrt{6} = (5 + 2 \sqrt{6})^i =: x_i + y_i \sqrt{6}.
\]
With $b_1 = \beta_1 = x_i$ and $a_1 = \alpha_1 = 3 y_i$, we can construct the following example:
\[
a_1 = 3 y_i, \; \; b_1 = x_i, \; \; a_2 = x_i + 6 y_i, \; \; b_2 = 2 x_i + 2 y_i, \; \; a_3 = 3 x_i + 6 y_i, \; \; b_3 = 2 x_i + 6 y_i,
\]
\[
A = 3 y_i (x_i + 2 y_i) (x_i + 6 y_i), \; \; B = 2 x_i (x_i + y_i) (x_i + 3 y_i), \; \; \text{ and } \; \; n = A B
\]
using \eqref{basic3}, \eqref{basic3'}, \eqref{definitions}, \eqref{a3b3} and \eqref{alpha2beta2}. One can check that
\[
n = A B = (A + a_1) (B - b_1) = (A + a_2) (B - b_2) = (A + a_3) (B - b_3)
\]
and
\begin{align*}
\frac{A}{a_3^3} =& \frac{3 y_i (x_i + 2 y_i) (x_i + 6 y_i)}{(3 x_i + 6 y_i)^3} = \frac{y_i (x_i + 6 y_i)}{9 (x_i + 2 y_i)^2} = \frac{y_i \bigl( (6 + \sqrt{6}) y_i + O(\frac{1}{y_i}) \bigr)}{9 \bigl( (2 + \sqrt{6}) y_i + O(\frac{1}{y_i}) \bigr)^2} \\
=& \frac{ 6 + \sqrt{6} + O( \frac{1}{y_i^2} )}{9 \bigl( 2 + \sqrt{6} + O( \frac{1}{y_i^2} ) \bigr)^2} = \frac{6 + \sqrt{6}}{9 (2 + \sqrt{6})^2} + O \Bigl( \frac{1}{\alpha_1^2} \Bigr) = \frac{6 + \sqrt{6}}{9 (2 + \sqrt{6})^2} + O \Bigl( \frac{1}{n^{1/3}} \Bigr)
\end{align*}
by $x_i - y_i \sqrt{6} = \frac{1}{x_i + y_i \sqrt{6}}$ or $x_i = \sqrt{6} y_i + O(\frac{1}{y_i})$. This shows that the upper bound in Theorem \ref{thm1} is best possible.
\end{proof}



\begin{thebibliography}{9}
\bibitem{C1} T.H. Chan, Numbers with three close factorizations and lattice points on hyperbolas, {\it Integers} {\bf 18} (2018), A13, 9 pp.

\bibitem{C2} T.H. Chan, Numbers with three close factorizations and central lattice points of hyperbolas, arXiv:2507.07094. 

\bibitem{CJ1} J. Cilleruelo and J. Jim\'{e}nez-Urroz, Divisors in a Dedekind domain, {\it Acta Arith.} {\bf 85} (1998), 229--233.

\bibitem{CJ2} J. Cilleruelo and J. Jim\'{e}nez-Urroz, The hyperbola $x y = N$, {\it Journal de Th\'{e}orie des Nombres de Bordeaux} {\bf 12} (2000), 87--92.

\bibitem{Co1} K. Conrad, Pell's equation I, \\
https://kconrad.math.uconn.edu/blurbs/ugradnumthy/pelleqn1.pdf

\bibitem{Co2} K. Conrad, Pell's equation II, \\
https://kconrad.math.uconn.edu/blurbs/ugradnumthy/pelleqn2.pdf

\bibitem{GJ} A. Granville and J. Jim\'{e}nez-Urroz, The least common multiple and lattice points on hyperbolas, {\it Quarterly Journal of Mathematics} (Oxford) {\bf 51} (2000), 343--352.
\end{thebibliography}
\end{document}